\documentclass[a4paper,11pt]{amsart}
\usepackage[a4paper,left=3cm,right=3cm,top=3cm,bottom=3cm]{geometry}
\usepackage{amsmath, amssymb, amsthm}
\numberwithin{equation}{section} 

\usepackage{tikz-cd}
\usepackage{newtxtext,newtxmath}
\usepackage{mathtools}
\usepackage{physics}
\usepackage{esint}
\usepackage{subcaption}
\usepackage{graphicx}
\usepackage{enumitem}
\usepackage{xcolor}
\usepackage{hyperref}
\usepackage{cleveref}
\usepackage{comment}
\usepackage{orcidlink}
\usepackage{needspace}
   
\title[Fundamental Principle for $\lambda$-Homogeneous Solutions]{Towards a Fundamental Principle for \\$\lambda$-Homogeneous Solutions on Cones}
\author[M. Tsopanopoulos]{
Michael Tsopanopoulos\, \orcidlink{0009-0007-3167-6862} \\
\texttt{tsopanopoulos@wias-berlin.de} }
\thanks{Weierstrass Institute for Applied Analysis and Stochastics, Anton-Wilhelm-Amo-Straße 39, 10117 Berlin, Germany.}

\newtheorem{theorem}{Theorem}[section]
\newtheorem{lemma}[theorem]{Lemma}
\newtheorem{proposition}[theorem]{Proposition}

\newtheorem{assumption}{Assumption}

\theoremstyle{definition}
\newtheorem{definition}[theorem]{Definition}
\newtheorem{example}[theorem]{Example}
\theoremstyle{remark}
\newtheorem*{remark}{\textbf{Remark}}

\begin{document}

\begin{abstract}
We prove a weak fundamental principle for \(\lambda\)-homogeneous solutions of homogeneous constant-coefficient systems on open pointed convex cones. Starting with the solution family \(S_{\mathcal B}\) arising in the Ehrenpreis--Palamodov theory, we construct a corresponding family \(S_{\mathcal B,\lambda}\) by replacing the exponential kernels \(e^{\langle x,z\rangle}\) with homogeneous kernels \((-\langle x,z\rangle)^\lambda\). The key tool is a Mellin-type operator on Paley--Wiener spaces, which links the classical theory to the Euler-constrained setting. For $\lambda\in \mathbb{C}\setminus \mathbb{N}_0$ and under a visibility assumption, we show that the span of \(S_{\mathcal B,\lambda}\) is dense in the space of \(\lambda\)-homogeneous solutions.
\end{abstract}

\maketitle

\noindent
\textbf{Keywords:} \(\lambda\)-homogeneous solutions, Ehrenpreis--Palamodov theory, Mellin transform

\medskip

\noindent
\textbf{MSC (2020):} 35E20, 35A08, 35C15, 35A22, 46F12

\section{Introduction}
Let $\Omega\subset \mathbb{R}^n$ be open and convex, and let \(P\in\mathbb C[z]^{J\times L}\) be a $J\times L$ matrix of polynomials in $z\in \mathbb{C}^n$. The fundamental principle for linear PDEs with constant coefficients describes solutions of 
\begin{align*}
    P(\partial_x)u=0\quad \text{on }\Omega
\end{align*}
in terms of exponential solutions $S_e(\Omega)$. These are solutions which admit a representation of the form 
\begin{align*}
    u(x)= p(x)e^{\langle x,z\rangle},\quad z\in\mathbb C^n,
\end{align*}
where $p\in \mathbb{C}[x]^L$ is a vector of polynomials. A weak form of this principle, due to Malgrange \cite{Malgrange1955}, states that the span of $S_e(\Omega)$ is dense in the solution space \(\ker(P(\partial_x))\). A strong form is provided by the celebrated Ehrenpreis--Palamodov representation theorem, which shows that every solution admits an integral representation in terms of a suitable solution family $S_{\mathcal{B}}(\Omega)\subset S_e(\Omega)$, determined by finitely many Noetherian operators. See the classical monographs of Ehrenpreis \cite{Ehrenpreis1970} and Palamodov \cite{Palamodov1970}, as well as Hörmander’s treatment in \cite{hörmandercomplex}, and the survey \cite{Hansen1983}.

We now impose an additional Euler constraint. Assume that $P$ is homogeneous of degree $m\in \mathbb{N}$ and that $\Omega\subset \mathbb{R}^n$ is an open convex cone. For $\lambda\in \mathbb{C}$, let $E^x_\lambda = x\cdot \nabla_x -\lambda$ denote the Euler operator. Our object of interest is the space
\begin{align}\label{solspaces}
    \operatorname{ker}(P(\partial_x))\cap \operatorname{ker}(E^x_\lambda).
\end{align}
On a cone, \(\ker(E_\lambda^x)\) is precisely the space of \(\lambda\)-homogeneous functions, that is, functions satisfying $u(tx) = t^\lambda u(x)$ for $t>0$. Thus, (\ref{solspaces}) consists of $\lambda$-homogeneous solutions of $P(\partial_x)u=0$. Although $\operatorname{span} S_{\mathcal{B}}(\Omega)$ is dense in \(\ker(P(\partial_x))\), its elements generally do not satisfy the Euler constraint. The aim of this paper is to construct a $\lambda$-homogeneous analogue and to prove a weak fundamental principle for the $\lambda$-homogeneous solution space. Here, the central idea is to replace the exponential kernel $e^{\langle x,z\rangle}$ by the homogeneous kernel
\begin{align}\label{newexp}
    \Omega \ni x \mapsto (-\langle x,z\rangle)^\lambda.
\end{align}
The Mellin transform provides the link between the two kernels. Indeed, for $\Re\lambda<0$ and $\Re \beta>0$, one has
\begin{align}\label{intro-mellin}
 \beta^\lambda =\frac{1}{\Gamma(-\lambda)} \int_0^\infty e^{-t\beta} t^{-\lambda-1}\,dt,
\end{align}
where $\Gamma$ denotes the gamma function; see Equation~(5) in \cite{ZagierAppendix}. Substituting \(\beta=-\langle x,z\rangle\) relates \(e^{\langle x,z\rangle}\) to \((-\langle x,z\rangle)^\lambda\). In the sequel, this relation is implemented by a Mellin-type operator $M_\lambda^z$ acting on holomorphic functions of Paley--Wiener growth. A regularized version of (\ref{intro-mellin}) extends the construction to all $\lambda\in \mathbb{C}\setminus \mathbb{N}_0$.

Beyond extending the classical theory to include the Euler constraint, $\lambda$-homogeneous solutions are of independent interest in regularity theory. In particular, information on the possible values of $\Re \lambda$ plays an important role in the analysis of singular behavior; see, for example, \cite{Daug1988EBVP, GrisEPND2011, KMR1997EBVP, MaRO2010EEPD, VMR2001SPCS}. In \cite{Tsopanopoulos2025}, such questions were studied for elliptic systems on an angle, that is, a two-dimensional conic domain, where kernels of the form (\ref{newexp}) appeared naturally. From this perspective, the present work can be viewed as a continuation in arbitrary dimensions $n\geq 2$, and for general homogeneous systems.

The paper is organized as follows. Section~\ref{review} recalls the classical fundamental principle, the associated affine characteristic varieties $V_{\mathfrak p_\sigma}$, and the Noetherian exponential family $S_{\mathcal{B}}(\Omega)$. In Section~\ref{lambdomo}, we introduce the homogeneous analogue $S_{\mathcal{B},\lambda}(\Omega)$ by replacing exponential kernels with $\lambda$-homogeneous kernels. The admissible parameter domain $U_\Omega$ is discussed, which ensures that 
\begin{align*}
    \Re( -\langle x,z\rangle)>0\quad \forall x\in \Omega,
\end{align*}
so that (\ref{newexp}) is well defined on $\Omega$. This leads naturally to the assumption that $\Omega$ is an open pointed cone. We also prove that $S_{\mathcal{B},\lambda}(\Omega)\subset \operatorname{ker}(P(\partial_x))\cap \operatorname{ker}(E^x_\lambda)$. Section~\ref{EuM} studies the Euler operator $E_\lambda^z$, the Mellin-type operator $M_\lambda^z$, and the Paley--Wiener estimates needed in the proof of the main theorem. Section~\ref{lastly} introduces the visibility assumption, which ensures that the visible parts $V_{\mathfrak{p}_\sigma}\cap U_\Omega$ are large enough for the duality argument. Under these assumptions, the main result shows that \(S_{\mathcal B,\lambda}(\Omega)\) is dense in \(\operatorname{ker}(P(\partial_x))\cap \operatorname{ker}(E^x_\lambda)\), thereby giving a \(\lambda\)-homogeneous analogue of the weak fundamental principle. The final section discusses current limitations and possible extensions, including ideas for $\lambda\in \mathbb{N}_0$, non-pointed cones $\Omega$, and a graded family of Noetherian operators. It also formulates a conjectural \(\lambda\)-homogeneous analogue of the strong fundamental principle.

\section{Preliminaries}\label{review}
\subsection{Regularized Mellin formula}
Formula \eqref{intro-mellin} is the Mellin representation of $\beta^\lambda = \exp(\lambda \log(\beta))$, where \(\log\) denotes the principal branch on \(\mathbb C\setminus(-\infty,0]\). Note that throughout the document we always use the principal branch, that is, $\arg (z)\in (-\pi,\pi]$. For \(\Re\lambda<0\) and \(\Re\beta>0\), the integral in \eqref{intro-mellin} converges absolutely. The restriction \(\Re\lambda<0\) is imposed by the behavior of the integrand at \(t=0\). We now record the standard regularized form of this identity, obtained by subtracting finitely many Taylor terms at \(t=0\); compare \cite{ZagierAppendix}. 

First assume \(\Re \lambda<0\) and let \(q\in \mathbb{N}_0\). Splitting the integral in \eqref{intro-mellin} at \(t=1\) and adding and subtracting the Taylor polynomial of degree \(q\) on \((0,1)\), we obtain
\begin{align*}
\Gamma(-\lambda) \beta^\lambda =\int_0^1  \sum_{k=0}^{q}\frac{(-t\beta)^k}{k!} t^{-\lambda-1}\,dt + \int_0^1  \left(e^{-t\beta}- \sum_{k=0}^{q}\frac{(-t\beta)^k}{k!}\right) t^{-\lambda-1}\,dt + \int_1^\infty e^{-t\beta} t^{-\lambda-1}\,dt.
\end{align*}
Here, all three integrals are absolutely convergent under the present assumption $\Re \lambda<0$. Evaluating the first integral term by term gives
\begin{align}\label{mellitrafo}
    \Gamma(-\lambda) \beta^\lambda=\sum_{k=0}^q \frac{(-\beta)^k}{k!(k-\lambda)} + \int_0^1\left(e^{-t\beta}- \sum_{k=0}^{q}\frac{(-t\beta)^k}{k!}\right)
    t^{-\lambda-1}\,dt +\int_1^\infty e^{-t\beta}t^{-\lambda-1} dt .
\end{align}
We shall use (\ref{mellitrafo}) as the regularized Mellin formula. Indeed, due to \(e^{-t\beta}-\sum_{k=0}^q \frac{(-t\beta)^k}{k!}=O(t^{q+1})\), the second integral in (\ref{mellitrafo}) converges near $t=0$ whenever $q>\Re \lambda-1$. The last integral converges for any $\lambda\in \mathbb{C}$ since $\Re \beta>0$. Consequently, for fixed $q\in \mathbb{N}_0$, the right-hand side of (\ref{mellitrafo}) provides a meromorphic extension in $\lambda$ on the half-plane $\Re \lambda<q+1$ with simple poles at $\lambda\in \{0,\dots,q\}$. By choosing $q\in \mathbb{N}_0$ sufficiently large, this formula applies to any $\lambda\in \mathbb{C}\setminus \mathbb{N}_0$. The exclusion of \(\lambda\in\mathbb N_0\) reflects the poles in the formula, and therefore, we mostly assume \(\lambda\in\mathbb C\setminus\mathbb N_0\). The case \(\lambda\in\mathbb N_0\) will be discussed briefly in Section~\ref{choicelambda}.

\subsection{Distributions and Paley--Wiener spaces}
We recall some standard facts; see, for example, \cite{hörmander}. For $A\subset \mathbb{C}^n$, let $\overline{A}$ and $\operatorname{conv}(A)$ denote the closure and convex hull of $A$, respectively. Here and throughout, for multi-indices $\alpha=(\alpha_1,\dots,\alpha_n)\in\mathbb{N}_0^n$, we write
\[
|\alpha|=\alpha_1+\cdots+\alpha_n,
\quad
\partial_z^\alpha=\partial_{z_1}^{\alpha_1}\cdots \partial_{z_n}^{\alpha_n},\quad z^\alpha={z_1}^{\alpha_1}\cdots {z_n}^{\alpha_n}\quad \text{for }z\in \mathbb{C}^n.
\]
Let $\Omega\subset \mathbb{R}^n$ for $n\in \mathbb{N}$ be an open convex set. We denote by $C^\infty(\Omega)$ the space of smooth functions on $\Omega$, endowed with its usual Fréchet topology, and by $\mathcal{E}'(\Omega)$ the space of compactly supported distributions on $\Omega$. If $T:C^\infty(\Omega)\to C^\infty(\Omega) $ is linear and continuous, we write
\begin{align*}
    T^t:\mathcal{E}'(\Omega)\to \mathcal{E}'(\Omega)
\end{align*}
for the transpose operator. For a compact convex set $K\subset \mathbb{R}^n$ and $N\in \mathbb{N}_0$, let $\mathcal{E}'(K,N)$ denote the distributions of order at most $N$ supported in $K$, and set
\begin{align*}
    \mathcal{E}'(K)\coloneqq\bigcup_{N\in \mathbb{N}_0} \mathcal{E}'(K,N).
\end{align*}
For $U\subset \mathbb{C}^n$ open, let $\mathcal{O}(U)$ denote the space of holomorphic functions on $U$. We use the following convention to define the Fourier--Laplace transform
\begin{align*}
    \hat{\nu}(z)\coloneqq\nu\left(e^{\langle x,z\rangle}\right)\quad \text{for }\nu\in \mathcal{E}'(\Omega),~ z\in \mathbb{C}^n,
\end{align*}
and we have $\hat{\nu}\in \mathcal{O}(\mathbb{C}^n)$. In fact, if $K\subset \mathbb{R}^n$ is compact convex and $N\in \mathbb{N}_0$, then by the Paley--Wiener--Schwartz theorem \cite[Thm.~7.3.1]{hörmander} the Fourier--Laplace transform identifies $\mathcal{E}'(K,N)$ isomorphically with the space $\operatorname{PW}(K,N)$, where
\begin{align*}
    \operatorname{PW}(K,N)\coloneqq\{f\in \mathcal{O}(\mathbb{C}^n): \sup_{z\in \mathbb{C}^n} |f(z)| w_{K,N}(z)<\infty\},\quad  w_{K,N}(z)\coloneqq(1+|z|)^{-N} e^{-H_K(\Re z)},
\end{align*}
and $H_K(w)\coloneqq \sup_{x\in K}\langle x,w\rangle$ for $w\in \mathbb{R}^n$ denotes the support function of \(K\). We also set
\begin{align*}
    \operatorname{PW}(K)\coloneqq\bigcup_{N\in \mathbb{N}_0} \operatorname{PW}(K,N),\quad  \operatorname{PW}(\Omega )\coloneqq\bigcup_{\substack{K\Subset \Omega\\ K\text{ compact convex}}} \operatorname{PW}(K) .
\end{align*}

A basic fact that we will use repeatedly is that Paley--Wiener spaces are stable under differential operators with polynomial coefficients: if \(K\subset \mathbb{R}^n\) is compact convex and
\begin{align}\label{laberer}
    \mathcal{B}(z,\partial_z) \coloneqq \sum_{|\alpha|\leq A} a_\alpha(z) \partial^\alpha_z \text{ with }a_\alpha\in \mathbb{C}[z]\quad \text{then} \quad f\in \operatorname{PW}(K)\implies \mathcal{B}(z,\partial_z) f\in \operatorname{PW}(K).
\end{align}
Here, $\mathbb{C}[z]=\mathbb{C}[z_1,\dots,z_n]$ denotes the ring of complex polynomials in $n$ variables. For vector-valued spaces we use the notation 
\[
C^\infty(\Omega)^k\coloneqq\underbrace{C^\infty(\Omega)\times\cdots\times C^\infty(\Omega)}_{k\text{ times}}\quad \text{for } k\in \mathbb{N},
\]
and similarly for $\mathcal{E}'(\Omega)^k$, $\mathcal{O}(\mathbb{C}^n)^k$, $\operatorname{PW}(\Omega)^k$, and $\mathbb{C}[z]^k$.

\subsection{Constant-coefficient systems and the weak fundamental principle}
This subsection reviews material from \cite[Section~7.6]{hörmandercomplex}. We consider a matrix of complex polynomials
\begin{align*}
    P(z) = (P_{jl}(z))_{1\leq j\leq J,~1\leq l\leq L},\quad \text{where }P_{jl}(z)\in \mathbb{C}[z]
\end{align*}
which defines a system of constant-coefficient differential operators
\begin{align*}
    P(\partial_x):C^\infty(\Omega)^L \to C^\infty(\Omega)^J,\quad  (P(\partial_x) u)_j = \sum_{l=1}^L P_{jl}(\partial_x)u_l.
\end{align*}
We are interested in the solution space $S(\Omega)\coloneqq\operatorname{ker}(P(\partial_x))$ which is a closed subspace of $C^\infty(\Omega)^L$ and inherits its Fréchet topology. An element $u\in S(\Omega)$ is called an exponential solution if it admits a representation of the form
\begin{align*}
    \Omega \ni x\mapsto u(x) = p(x) e^{\langle x,z\rangle}\quad \text{for some }z\in \mathbb{C}^n,~p\in \mathbb{C}[x]^L.
\end{align*}
We denote by $ S_e(\Omega)\subset S(\Omega)$ the set of all exponential solutions. The following weak version of the fundamental principle is classical:
\begin{theorem}[\cite{hörmandercomplex}, Thm.~7.6.14]\label{malgrangei}
    Let $\Omega\subset \mathbb{R}^n$ be an open convex set. Then $\operatorname{span}(S_e(\Omega))$ is dense in $S(\Omega)$.
\end{theorem}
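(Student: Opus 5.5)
We argue by duality via Hahn--Banach. Since $S(\Omega)$ is a closed subspace of the Fréchet space $C^\infty(\Omega)^L$, whose dual is $\mathcal{E}'(\Omega)^L$, it suffices to show the following. If $\mu\in\mathcal{E}'(\Omega)^L$ annihilates every exponential solution, then $\mu$ annihilates all of $S(\Omega)$.

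\textbf{Step 1: translate the hypothesis to the Fourier--Laplace side.} Set $\hat\mu(z)=\mu(e^{\langle x,z\rangle})\in\operatorname{PW}(\Omega)^L$. Differentiating under $\mu$ gives
\[
\mu\bigl(x^\alpha e^{\langle x,z\rangle}\bigr)=\partial_z^\alpha\hat\mu(z),
\]
so the condition $\mu\perp S_e(\Omega)$ becomes an algebraic condition on $\hat\mu$. It says that $\langle p(\partial_z)\hat\mu(z),\cdot\rangle$ vanishes for every pair $(p,z)$ such that $p(x)e^{\langle x,z\rangle}$ solves $P(\partial_x)u=0$. Equivalently, $\sum_l Q_l(\partial_z)\hat\mu_l$ vanishes at $z$ for every Noetherian-type differential condition describing the module $\mathbb{C}[z]^J\,{}^tP$ locally.

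\textbf{Step 2: obtain a module factorization.} This is the algebraic heart of the argument: the Ehrenpreis--Palamodov / Malgrange local division statement. It says that a holomorphic vector $F$ satisfying all these differential conditions at every point belongs, locally, to the module $\mathcal{O}^J\,{}^tP$. By Cartan's Theorem B on $\mathbb{C}^n$ this yields a global factorization
\[
\hat\mu={}^tP(z)\,G,\qquad G\in\mathcal{O}(\mathbb{C}^n)^J.
\]

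\textbf{Step 3: control the growth of $G$ (main obstacle).} We need $G\in\operatorname{PW}(K')^J$ for some compact convex $K'\Subset\Omega$. I would invoke Hörmander's semi-global division theorem with bounds (the $L^2$ estimates for $\bar\partial$ together with the coherence estimates of \cite[Ch.~7.6]{hörmandercomplex}). It allows $G$ to be chosen with
\[
|G(z)|\le C(1+|z|)^{N'}e^{H_K(\Re z)}
\]
up to an arbitrarily small enlargement of $K$. Convexity of $\Omega$ ensures that the enlarged set $K'$ still lies in $\Omega$. By Paley--Wiener--Schwartz, $G=\hat\nu$ for some $\nu\in\mathcal{E}'(\Omega)^J$.

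\textbf{Step 4: conclude.} Comparing Fourier--Laplace transforms, $\hat\mu={}^tP\hat\nu$ means $\mu=P(\partial_x)^t\nu$. Then for every $u\in S(\Omega)$,
\[
\mu(u)=\nu\bigl(P(\partial_x)u\bigr)=0,
\]
which closes the Hahn--Banach argument. Most of the difficulty sits in Steps 2--3; in the paper these would simply be cited from \cite[Thm.~7.6.14]{hörmandercomplex}.
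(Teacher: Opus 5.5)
Your proposal is correct and follows essentially the argument the paper relies on: it is the Hahn--Banach/Fourier--Laplace duality used for Theorem~\ref{wotuw}. Your Steps~1--2 (the duality at each $\zeta$ between the germ module $P^t\mathcal O_\zeta^J$ and the exponential-polynomial solutions $p(x)e^{\langle x,\zeta\rangle}$, followed by Cartan's Theorem~B) take the place of Theorem~\ref{makka}, and your Step~3 is precisely Lemma~\ref{natos}, which the paper takes from the proof of \cite[Thm.~7.6.14]{hörmandercomplex}. One small misattribution: the enlarged set $K'$ stays in $\Omega$ because $K\Subset\Omega$, not because $\Omega$ is convex; convexity of $\Omega$ is what you use implicitly in Step~1, where it guarantees that the convex hull of $\operatorname{supp}\mu$ lies in $\Omega$, so that $\hat\mu\in\operatorname{PW}(\Omega)^L$.
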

A key ingredient is the following factorization result in Paley--Wiener spaces:
\begin{lemma}\label{natos}
    $\operatorname{PW}(\Omega)^L \cap P^t \mathcal{O}(\mathbb{C}^n)^J = P^t \operatorname{PW}(\Omega)^J$
\end{lemma}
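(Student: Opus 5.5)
The inclusion ``$\supseteq$'' is immediate. If $h\in \operatorname{PW}(K,N)^J$ for some compact convex $K\Subset\Omega$, then each entry of $P^t h$ is a finite sum of polynomials times elements of $\operatorname{PW}(K,N)$. It therefore lies in $\operatorname{PW}(K,N+\deg P)$, which is the special case of \eqref{laberer} with a zero-order operator. Clearly $P^t h\in P^t\mathcal O(\mathbb C^n)^J$. The content of the lemma is the reverse inclusion. Given $f\in \operatorname{PW}(K,N)^L$ and some entire $g\in\mathcal O(\mathbb C^n)^J$ with $f=P^t g$, we must replace $g$ by a solution $h$ of $P^t h=f$ that has Paley--Wiener growth, with support function $H_K$ and a polynomial loss only.

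For this I would invoke Hörmander's division theorem with $L^2$ bounds for polynomial matrices, the result preceding Thm.~7.6.14 in \cite[Section~7.6]{hörmandercomplex}. It states the following. Let $\varphi$ be plurisubharmonic on $\mathbb C^n$ with $|\varphi(z)-\varphi(z')|\le C$ for $|z-z'|\le 1$. Suppose $f=P^t g$ for some entire $g$. Then there is an entire $h$ with $P^t h=f$ and
\[
\int_{\mathbb C^n} |h|^2 e^{-\varphi}(1+|z|^2)^{-M}\,d\lambda \;\le\; C'\int_{\mathbb C^n}|f|^2 e^{-\varphi}\,d\lambda ,
\]
where $M$ and $C'$ depend only on $P$ and the constant $C$. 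The proof of this estimate rests on the Oka--Cartan type analysis of polynomial modules together with $L^2$ estimates for $\bar\partial$. I regard it as the main obstacle: it is a deep ingredient and cannot be reproduced in a few lines. I would quote it rather than reprove it.

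To apply it, take
\[
\varphi(z)=2H_K(\Re z)+2(N+n+1)\log(1+|z|^2).
\]
This is plurisubharmonic, since $H_K$ is convex and $\log(1+|z|^2)$ is plurisubharmonic. It satisfies the unit-scale oscillation bound, because $H_K$ is Lipschitz with constant $\sup_{x\in K}|x|$. The growth assumption $|f(z)|\le C(1+|z|)^N e^{H_K(\Re z)}$ makes $\int|f|^2e^{-\varphi}$ finite. The theorem then yields $h$ with $P^th=f$ and a weighted $L^2$ bound.

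Finally, convert this $L^2$ bound into a pointwise bound. Each component $h_j$ is entire, so $|h_j|^2$ is plurisubharmonic, and the sub-mean-value property over the unit ball $B(z,1)$ gives
\[
|h_j(z)|^2\le c_n\int_{B(z,1)}|h_j|^2\,d\lambda .
\]
On $B(z,1)$ the weight $e^{-\varphi}(1+|w|^2)^{-M}$ is comparable, up to constants, to $e^{-2H_K(\Re z)}(1+|z|)^{-2(N+n+1+M)}$. Hence
\[
|h(z)|\le C''(1+|z|)^{N'}e^{H_K(\Re z)}
\]
for some $N'$, that is, $h\in\operatorname{PW}(K,N')^J\subset \operatorname{PW}(\Omega)^J$. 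This gives $f=P^th\in P^t\operatorname{PW}(\Omega)^J$ and proves the lemma. Note that the compact set $K$ is preserved exactly; only the order $N$ increases.
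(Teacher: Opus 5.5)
Your proposal is correct and follows the same route the paper relies on. The paper gives no proof of Lemma~\ref{natos} itself; it defers to H\"ormander's proof of Thm.~7.6.14, which uses division with weighted $L^2$ bounds for the polynomial matrix, with a plurisubharmonic weight built from the support function of $K$ plus logarithmic terms, and then recovers Paley--Wiener growth on the same compact set $K$ (the inclusion $\supseteq$ being immediate from \eqref{laberer}, as you say). One harmless arithmetic slip: on $B(z,1)$ your weight $e^{-\varphi}(1+|w|^2)^{-M}$ is comparable to $e^{-2H_K(\Re z)}(1+|z|)^{-4(N+n+1)-2M}$ rather than $(1+|z|)^{-2(N+n+1+M)}$, which only changes the value of $N'$.
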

which is shown in course of the proof for \cite[Thm.~7.6.14]{hörmandercomplex}. Here, $P^t f\in \mathcal{O}(\mathbb{C}^n)^L$ denotes pointwise multiplication by the transpose matrix $P^t(z)$ for $f\in \mathcal{O}(\mathbb{C}^n)^J$. The inclusion $\supset$ is immediate from (\ref{laberer}); the reverse inclusion is the nontrivial part.

\subsection{Noetherian operators and the Ehrenpreis--Palamodov framework}
The strong form of the fundamental principle requires a more refined analysis of the module generated by $P^t$; this is essentially the content of Section 7.7 in \cite{hörmandercomplex}. Define the $\mathbb{C}[z]$-submodule
\begin{align*}
    M \coloneqq P^t \mathbb{C}[z]^J \subset \mathbb{C}[z]^L
\end{align*}
and choose a primary decomposition
\begin{align}\label{primdeco}
    M= \bigcap_{\sigma=1}^\mu M_{\sigma}
\end{align}
where each $M_\sigma\subset \mathbb{C}[z]^L$ is a primary submodule with associated prime ideal $\mathfrak{p}_\sigma\subset \mathbb{C}[z]$. We denote by
\begin{align*}
    V_{\mathfrak{p}_\sigma}=\{z\in \mathbb{C}^n: \varphi(z)=0\quad \forall \varphi\in \mathfrak{p}_\sigma\}
\end{align*}
the affine variety defined by $\mathfrak{p}_\sigma$. Associated with the decomposition, one can choose finitely many differential operators $\{ \tilde{\mathcal{B}}_{\sigma i}: \mathcal{O}(\mathbb{C}^n)^L\to \mathcal{O}(\mathbb{C}^n): 1\leq \sigma \leq \mu,\,1\leq i \leq \rho_\sigma\}$, called Noetherian operators, of the following form \cite[Thm.~7.7.6]{hörmandercomplex}:
\begin{align}\label{kermer}
\mathcal{O}(\mathbb{C}^n)^L\ni f\mapsto \tilde{\mathcal{B}}_{\sigma i}(z, \partial_z) f= \sum_{|\alpha|\leq A}\sum_{l=1}^L a_{\sigma i,\alpha}^l(z)  \partial_z^\alpha f^l\quad \text{where }a_{\sigma i,\alpha} \in \mathbb{C}[z]^L,
\end{align}
such that the following holds.

\begin{theorem}[\cite{hörmander}, Thm.~7.7.10]\label{makka}
Let $U \subset \mathbb{C}^n$ be a domain of holomorphy. For $f\in \mathcal{O}(U)^L $ we have $f\in P^t \mathcal{O}(U)^J$ if and only if
    \begin{align*}
        \forall \sigma=1,\dots, \mu: \quad \tilde{\mathcal{B}}_{\sigma i}(z,\partial_z) f(z)=0\quad \text{ for all }z\in V_{\mathfrak{p}_\sigma}\cap U \text{ and } i =1,\dots,\rho_\sigma.
    \end{align*}
\end{theorem}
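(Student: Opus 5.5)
The plan is to prove the two implications separately. Necessity is a formal computation. For sufficiency I would first show that \(f\) lies in the image of \(P^t\) at the level of germs, and then pass from germs to a global solution using the fact that \(U\) is Stein.

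\emph{Necessity.} Suppose \(f=P^t h\) with \(h\in\mathcal O(U)^J\), so \(f=\sum_j h_j\,p_j\), where the \(p_j\in\mathbb C[z]^L\) are the columns of \(P^t\) and lie in \(M\subset M_\sigma\).
\begin{enumerate}
\item Expand \(\tilde{\mathcal B}_{\sigma i}(h_j p_j)\) by the Leibniz rule. This produces terms \(\partial^\beta h_j\cdot \tilde{\mathcal B}^{(\beta)}_{\sigma i}p_j\), where \(\tilde{\mathcal B}^{(\beta)}_{\sigma i}\) is obtained from \(\tilde{\mathcal B}_{\sigma i}\) by iterated commutators with multiplication by the coordinates \(z_k\).
\item The Noetherian operators in \cite[Thm.~7.7.6]{hörmandercomplex} can be chosen so that this commutator-closed family still annihilates \(M_\sigma\) on \(V_{\mathfrak p_\sigma}\). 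This is exactly the property that makes the condition ``\(\tilde{\mathcal B}_{\sigma i}g=0\) on \(V_{\mathfrak p_\sigma}\)'' equivalent to \(g\in M_\sigma\) for polynomial \(g\).
\item Hence every term vanishes on \(V_{\mathfrak p_\sigma}\cap U\).
\end{enumerate}

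\emph{Sufficiency, local step.} Fix \(w\in U\); I want to show the germ \(f_w\) lies in \(P^t\mathcal O_w^J=M\mathcal O_w\).
\begin{enumerate}
\item Since \(\mathcal O_w\) is flat over \(\mathbb C[z]\), intersections are preserved under tensoring, so \(M\mathcal O_w=\bigcap_\sigma M_\sigma\mathcal O_w\). It therefore suffices to show \(f_w\in M_\sigma\mathcal O_w\) for each \(\sigma\).
\item If \(w\notin V_{\mathfrak p_\sigma}\), some element of \(\mathfrak p_\sigma\) is a unit at \(w\). A power of that element multiplies \(\mathbb C[z]^L\) into \(M_\sigma\), so \(M_\sigma\mathcal O_w=\mathcal O_w^L\) and there is nothing to prove.
\item If \(w\in V_{\mathfrak p_\sigma}\), I need the analytic form of the Noetherian characterization: a germ \(g\) lies in \(M_\sigma\mathcal O_w\) if and only if \(\tilde{\mathcal B}_{\sigma i}g=0\) on \(V_{\mathfrak p_\sigma}\) near \(w\).
\end{enumerate}

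For step 3 I would argue as follows.
\begin{enumerate}
\item Choose coordinates in Noether normal position for \(\mathfrak p_\sigma\). Then \(\mathbb C[z]^L/M_\sigma\) is a torsion-free finite module over \(\mathbb C[z']\), where \(z'\) are the free variables.
\item At generic points of \(V_{\mathfrak p_\sigma}\), where the projection is unramified, the conditions reduce to finitely many Taylor-coefficient conditions in the normal variables. These identify the germ class modulo \(M_\sigma\mathcal O\) there.
\item Extend across the branch locus using torsion-freeness, i.e.\ that \(M_\sigma\) is primary. A germ satisfying the conditions has a class in the quotient killed off a thin set, hence the class is zero, via a Riemann-extension / division argument.
\end{enumerate}
I expect this local analytic step to be the main obstacle, since it is the analytic heart of \cite[\S7.7]{hörmandercomplex}.

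\emph{Sufficiency, global step.} Consider the exact sequence of coherent sheaves on \(U\)
\[
0\to\mathcal R\to\mathcal O^J\xrightarrow{P^t}\mathcal O^L,
\]
where \(\mathcal R\) is the relation sheaf, which is coherent by Oka's theorem. By the local step, \(f\) is a global section of the image sheaf \(P^t\mathcal O^J\). Since \(U\) is a domain of holomorphy, it is Stein, and Cartan's Theorem B gives \(H^1(U,\mathcal R)=0\). The long exact sequence then shows that global sections of \(\mathcal O^J\) surject onto global sections of the image, so \(f=P^t h\) with \(h\in\mathcal O(U)^J\).
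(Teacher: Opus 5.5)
Your overall architecture matches the standard proof behind the cited result; the paper itself gives no proof and simply quotes H\"ormander. That architecture is: necessity by the Leibniz rule, reduction of local membership to each $M_\sigma\mathcal{O}_w$ by flatness, the trivial case $w\notin V_{\mathfrak{p}_\sigma}$, and globalization by Oka's theorem and Cartan's Theorem~B on the Stein domain $U$. These parts are fine. In necessity step~2 no special choice is even needed: for $m\in M_\sigma$ and $a\in V_{\mathfrak{p}_\sigma}$ you have $(z-a)^\beta m\in M_\sigma$, and this already forces $\tilde{\mathcal{B}}^{(\beta)}_{\sigma i}m(a)=0$.

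The genuine gap is local step~3. The only property of the operators you actually use is that they characterize $M_\sigma$ among polynomials. The germ-level characterization does not follow from that property, and it can fail. Correspondingly, your claim that at generic points ``the conditions reduce to finitely many Taylor-coefficient conditions in the normal variables'' is unjustified. If the operators contain tangential derivatives, the conditions become differential equations along $V_{\mathfrak{p}_\sigma}$. These can have nonzero local analytic solutions, and no torsion-freeness argument across the branch locus repairs that.

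Here is a concrete counterexample. Take $n=2$, $J=L=1$ and $P(z)=z_2$, so $M=\mathfrak{p}_1=(z_2)$ and $V=\{z_2=0\}$. Use the single operator $\tilde{\mathcal{B}}=z_1\partial_{z_1}-\lambda$ with $\lambda\notin\mathbb{N}_0$. For a polynomial $u$ with $u(z_1,0)=\sum_k a_k z_1^k$ one gets $\tilde{\mathcal{B}}u|_V=\sum_k (k-\lambda)a_k z_1^k$. Hence $\tilde{\mathcal{B}}u=0$ on $V$ if and only if $u\in(z_2)$, so $\tilde{\mathcal{B}}$ characterizes $M$ among polynomials. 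Now take the convex set $U=\{\operatorname{Re} z_1>0\}\times\mathbb{C}$. The function $f=z_1^\lambda$ satisfies $\tilde{\mathcal{B}}f\equiv 0$ there, but $f(1,0)=1$, so $f\notin z_2\mathcal{O}(U)$. The same germ at $w=(1,0)$ refutes your local claim.

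So the germ-level statement must come from the particular operators produced by the construction in [Thm.~7.7.6], not from their polynomial property. With operators in normal form, step~3 becomes short and needs no generic-point or branch-locus argument. Normal form means that after a Noether normalization $z=(z',z'')$ the operators differentiate only in $z''$. Then:
\begin{enumerate}
\item The map $u\mapsto(\tilde{\mathcal{B}}_{\sigma i}u \bmod \mathfrak{p}_\sigma)_i$ is $\mathbb{C}[z']$-linear with kernel $M_\sigma$, hence injective on the finite $\mathbb{C}[z']$-module $\mathbb{C}[z]^L/M_\sigma$.
\item It stays injective after the flat base change to $\mathcal{O}_{\mathbb{C}^k,w'}$.
\item Under that base change the module becomes $\bigoplus_w \mathcal{O}_w^L/M_\sigma\mathcal{O}_w$ over the fibre points $w$. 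This is compatible with the operators, because they are local.
\item Finally, $\mathfrak{p}_\sigma\mathcal{O}_w$ is the ideal of germs vanishing on $V_{\mathfrak{p}_\sigma}$ near $w$.
\end{enumerate}
Together these give exactly the local statement you need.
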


\begin{theorem}\label{useituseit}
    Let $f\in \mathcal{O}(\mathbb{C}^n)^L$ and assume there exists a compact convex set $K\subset \mathbb{R}^n$ and $N\in \mathbb{N}_0$ such that
    \begin{align*} 
    \max_{1\leq \sigma \leq \mu}\max_{1\leq i\leq \rho_\sigma} \sup_{z\in V_{\mathfrak{p}_\sigma}} |\tilde{\mathcal{B}}_{\sigma i}(z,\partial_z) f (z)| w_{K,N}(z)<\infty .
    \end{align*}
    Then $ f\in \operatorname{PW}(K)^L + P^t \mathcal{O}(\mathbb{C}^n)^J$. 
\end{theorem}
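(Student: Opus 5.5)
The plan is to reduce the statement to Theorem~\ref{makka} with $U=\mathbb{C}^n$. It suffices to produce $F\in \operatorname{PW}(K)^L$ whose Noetherian data coincide with those of $f$ on the characteristic varieties:
\begin{align*}
    \tilde{\mathcal B}_{\sigma i}(z,\partial_z)F(z)=\tilde{\mathcal B}_{\sigma i}(z,\partial_z)f(z)\quad \text{for all } z\in V_{\mathfrak p_\sigma},\ 1\le\sigma\le\mu,\ 1\le i\le\rho_\sigma .
\end{align*}
Indeed, $f-F\in\mathcal O(\mathbb{C}^n)^L$ then satisfies the criterion of Theorem~\ref{makka} with $U=\mathbb{C}^n$, so $f-F\in P^t\mathcal O(\mathbb{C}^n)^J$. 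This gives $f=F+(f-F)\in \operatorname{PW}(K)^L+P^t\mathcal O(\mathbb{C}^n)^J$. The whole proof is therefore an interpolation (extension) problem with growth control for the data $\{\tilde{\mathcal B}_{\sigma i}f|_{V_{\mathfrak p_\sigma}}\}$.

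To solve it, I would introduce the weight
\begin{align*}
    \varphi(z)=H_K(\Re z)+N\log(1+|z|),
\end{align*}
which is plurisubharmonic: $H_K$ is convex on $\mathbb{R}^n$, and $\log(1+|z|)$ is plurisubharmonic. The weight also has bounded oscillation on unit balls, that is, $|\varphi(z)-\varphi(w)|\le C$ whenever $|z-w|\le 1$, because $H_K$ is Lipschitz with constant $\sup_{x\in K}|x|$. By hypothesis the data are bounded by $C e^{\varphi}$ on $V_{\mathfrak p_\sigma}$. I would then invoke Hörmander's semi-global extension theorem for Noetherian operators, which is the weighted companion of Theorem~\ref{makka} in \cite[Section~7.7]{hörmandercomplex} and is proved with $L^2$-estimates for $\bar\partial$ together with the Koszul and Cartan machinery behind Lemma~\ref{natos}. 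It yields $F\in\mathcal O(\mathbb{C}^n)^L$ with the prescribed Noetherian data and
\begin{align*}
    \int_{\mathbb{C}^n}|F|^2e^{-2\varphi}(1+|z|)^{-2N''}\,d\lambda<\infty
\end{align*}
for some $N''$ depending only on $P$ and the operators $\tilde{\mathcal B}_{\sigma i}$. Before applying it, I would first convert the pointwise bound on the varieties into a weighted $L^2$ bound there, at the cost of a few more powers of $(1+|z|)$, using polynomial volume growth of algebraic varieties.

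The final step converts the $L^2$ bound back to a sup bound. By the mean value inequality on the unit ball around $z$ and the bounded oscillation of $\varphi$, we get $|F(z)|\le C(1+|z|)^{N'}e^{H_K(\Re z)}$, so $F\in\operatorname{PW}(K,N')^L$. The essential feature is that $K$ itself is not enlarged: only the polynomial order grows. The Lipschitz control ensures that the exponential part $H_K(\Re z)$ survives intact.

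The main obstacle is the weighted extension step, namely constructing $F$ from data living only on the possibly singular and non-reduced varieties with multiplicity encoded by the $\tilde{\mathcal B}_{\sigma i}$, while losing only polynomial factors. I would not reprove this. I would first try to quote Hörmander's Theorem~7.6.13/7.7.13 directly in the weighted form above, and check that its hypotheses on $\varphi$ (plurisubharmonic, bounded oscillation on unit balls) are exactly what the Paley--Wiener weight $w_{K,N}$ provides.
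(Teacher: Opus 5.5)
Your proposal is correct and follows the paper's route. The paper only states that the theorem is the standard application of Hörmander's weighted extension theorem for Noetherian operators (Thm.~7.7.15 in the complex-analysis monograph), used with plurisubharmonic weights built from $H_K(\Re z)$; your argument spells out that application in full, namely the reduction via Theorem~\ref{makka}, the choice $\varphi(z)=H_K(\Re z)+N\log(1+|z|)$, and the passage between pointwise and weighted $L^2$ bounds that keeps $K$ fixed and only raises the polynomial order.
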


In practice, Theorem~\ref{makka} reduces membership in \(P^t\mathcal O(U)^J\) to finitely many vanishing conditions of the Noetherian operators along the varieties \(V_{\mathfrak p_\sigma}\), while Theorem~\ref{useituseit} allows one to deduce Paley--Wiener type growth for \(f\), modulo \(P^t\mathcal O(\mathbb C^n)^J\). Theorem~\ref{useituseit} is the standard application of \cite[Thm.~7.7.15]{hörmandercomplex}, obtained by choosing the plurisubharmonic weights in terms of \(H_K(\Re z)\).

To every scalar-valued differential operator $\tilde{\mathcal{B}}_{\sigma i}$ in (\ref{kermer}), we associate the $\mathbb{C}^L$-valued differential operator
\begin{align}\label{kermer2}
\mathcal{O}(\mathbb{C}^n)\ni f\mapsto \mathcal{B}_{\sigma i}(z, \partial_z) f= \sum_{|\alpha|\leq A} a_{\sigma i,\alpha}(z)  \partial_z^\alpha f \in \mathcal{O}(\mathbb{C}^n)^L.
\end{align}
Moreover, replacing $\partial_z^\alpha$ with $x^\alpha$ in (\ref{kermer2}), we have 
\begin{align*}
    \mathcal{B}_{\sigma i}(z,x) = \sum_{|\alpha|\leq A} a_{\sigma i,\alpha}(z)  x^\alpha\in \mathbb{C}[z,x]^L\quad \text{such that}\quad \mathcal{B}_{\sigma i}(z,\partial_z) e^{\langle x,z\rangle} = \mathcal{B}_{\sigma i}(z,x) e^{\langle x,z\rangle}.
\end{align*}
From now on, we fix a primary decomposition \eqref{primdeco} and an associated family of Noetherian operators. These will be used to generate a family of exponential solutions. For this, define
\begin{align}\label{basis1}
    S_{\mathcal{B}}(\Omega)\coloneqq \{\Omega \ni x\mapsto \mathcal{B}_{\sigma i}(z,x) e^{\langle x,z\rangle}: \sigma=1,\dots,\mu;~i=1,\dots, \rho_\sigma,~z\in V_{\mathfrak{p}_\sigma}\}\subset C^\infty(\Omega)^L.
\end{align}
By the discussion in \cite[p.~245]{hörmandercomplex}, we have
\begin{align*}
    S_{\mathcal{B}}(\Omega) \subset S_e(\Omega)\subset S(\Omega).
\end{align*}
Also, for every $\nu\in \mathcal{E}'(\Omega)^L$, differentiation under the pairing defining the Fourier--Laplace transform yields
\begin{align}\label{kllae}
    \tilde{\mathcal{B}}_{\sigma i}(z,\partial_z) \hat{\nu}(z) = \nu( \mathcal{B}_{\sigma i}(z,\partial_z) e^{\langle x,z\rangle}) = \nu( \mathcal{B}_{\sigma i}(z,x) e^{\langle x,z\rangle}).
\end{align}

As a direct consequence of the preceding results, one obtains a refinement of Theorem \ref{malgrangei}:
\begin{theorem}\label{wotuw}
    Let $\Omega\subset \mathbb{R}^n$ be an open convex set. Then $\operatorname{span}(S_\mathcal{B}(\Omega))$ is dense in $S(\Omega)$.
\end{theorem}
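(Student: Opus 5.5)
The plan is a Hahn--Banach duality argument. Since $S(\Omega)$ is a closed subspace of the Fréchet space $C^\infty(\Omega)^L$, whose dual is $\mathcal{E}'(\Omega)^L$, it suffices to show the following. If $\nu\in\mathcal{E}'(\Omega)^L$ annihilates every element of $S_{\mathcal B}(\Omega)$, then $\nu$ annihilates all of $S(\Omega)$. So fix such a $\nu$. Its support is a compact subset of $\Omega$; replacing it by its convex hull, we get a compact convex $K\Subset\Omega$ (here convexity of $\Omega$ is used) and some $N$ with $\nu\in\mathcal{E}'(K,N)^L$. Hence $\hat\nu\in\operatorname{PW}(K,N)^L$.

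The vanishing hypothesis translates through \eqref{kllae}. For every $\sigma$, $i$ and $z\in V_{\mathfrak p_\sigma}$ we get $\tilde{\mathcal B}_{\sigma i}(z,\partial_z)\hat\nu(z)=\nu(\mathcal B_{\sigma i}(z,x)e^{\langle x,z\rangle})=0$. Applying Theorem~\ref{makka} with $U=\mathbb{C}^n$, which is a domain of holomorphy, gives $\hat\nu\in P^t\mathcal{O}(\mathbb{C}^n)^J$. Since also $\hat\nu\in\operatorname{PW}(\Omega)^L$, Lemma~\ref{natos} yields $\hat\nu=P^t g$ for some $g\in\operatorname{PW}(\Omega)^J$. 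By Paley--Wiener--Schwartz, $g=\hat\mu$ for a unique $\mu\in\mathcal{E}'(\Omega)^J$.

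Next we identify $\nu$ with $P(\partial_x)^t\mu$. With our convention, $\widehat{\partial^\alpha_x{}^t\mu}(z)=\mu(\partial_x^\alpha e^{\langle x,z\rangle})=z^\alpha\hat\mu(z)$. Hence the Fourier--Laplace transform of $P(\partial_x)^t\mu$, defined by $(P(\partial_x)^t\mu)(u)=\mu(P(\partial_x)u)$, equals $P^t(z)\hat\mu(z)=\hat\nu(z)$. The transform is injective on $\mathcal{E}'$, so $\nu=P(\partial_x)^t\mu$. Therefore, for every $u\in S(\Omega)$ we have $\nu(u)=\mu(P(\partial_x)u)=0$, and Hahn--Banach gives the density.

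The only nontrivial inputs are Theorem~\ref{makka} and the division Lemma~\ref{natos}, both assumed here. So I expect no real obstacle. The one point needing care is bookkeeping: check that the transposes and the sign conventions in $\hat\nu(z)=\nu(e^{\langle x,z\rangle})$ match, so that $P^t$ acting on $\hat\mu$ really corresponds to $P(\partial_x)^t$ with no sign changes. Here there is no reflection, because $\partial_x e^{\langle x,z\rangle}=z\,e^{\langle x,z\rangle}$. As an alternative, one could note that $S_{\mathcal B}\subset S_e$ and argue via Theorem~\ref{malgrangei}, but that goes in the wrong direction. The direct duality argument above is the one to use.
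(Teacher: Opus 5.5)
Your proposal is correct and follows the paper's proof step for step. As there, annihilation of $S_{\mathcal B}(\Omega)$ together with \eqref{kllae} and Theorem~\ref{makka} (with $U=\mathbb{C}^n$) gives $\hat\nu\in \operatorname{PW}(\Omega)^L\cap P^t\mathcal O(\mathbb C^n)^J$, Lemma~\ref{natos} gives $\hat\nu=P^t\hat\mu$ with $\mu\in\mathcal E'(\Omega)^J$, and $\nu=P(\partial_x)^t\mu$ then annihilates $S(\Omega)$. The only difference is that you explicitly check $\widehat{P(\partial_x)^t\mu}=P^t\hat\mu$ and the injectivity of the Fourier--Laplace transform, which the paper uses tacitly.
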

\begin{proof}
    Let $\nu\in \mathcal{E}'(\Omega)^L$ vanish on $S_\mathcal{B}(\Omega)$. By (\ref{kllae}) and Theorem \ref{makka} we have $\hat{\nu}\in \operatorname{PW}(\Omega)^L\cap P^t \mathcal{O}(\mathbb{C}^n)^J$. Lemma \ref{natos} yields $\hat{\nu} = P^t \hat{\xi} $ for some $\xi \in \mathcal{E}'(\Omega)^J$. Therefore, for every $\varphi\in S(\Omega)$,
    \begin{align*}
        \langle \nu, \varphi\rangle = \langle P(\partial_x)^t\xi , \varphi\rangle = \langle \xi, P(\partial_x) \varphi\rangle=0.
    \end{align*}
    Thus $\nu$ vanishes on $S(\Omega)$, and the claim follows by an application of Hahn-Banach.
\end{proof}
Theorem~\ref{wotuw} remains a weak version of the fundamental principle, since it asserts only density, but it sharpens Theorem~\ref{malgrangei} by showing that density already holds for the distinguished family \(S_{\mathcal B}(\Omega)\). Our main result, Theorem~\ref{main}, will be a $\lambda$-homogeneous analogue of Theorem~\ref{wotuw}.

By contrast, the strong form of the fundamental principle, namely the Ehrenpreis--Palamodov representation theorem, gives more: every solution in $S(\Omega)$ admits a representation of the form
\begin{align*}
    u(x) = \sum^\mu_{\sigma=1} \sum^{\rho_\sigma}_{i=1} \int_{V_{\mathfrak{p}_\sigma}} \mathcal{B}_{\sigma i}(z,x) e^{\langle x,z\rangle} d\omega_{\sigma i}(z) .
\end{align*}
Here, each $\omega_{\sigma i}$ is a suitable weighted complex Radon measure on $V_{\mathfrak{p}_\sigma}$ such that the integrals converge absolutely; see \cite[Thm.~7.7.16]{hörmandercomplex}, the following remark therein, and also \cite{Hansen1983}.

\section{$\lambda$-homogeneous solutions}\label{lambdomo}
This section introduces the natural \(\lambda\)-homogeneous analogue of the exponential family \(S_{\mathcal B}(\Omega)\).

\subsection{Homogeneous solution space}
From now on we assume that $P\in \mathbb{C}[z]^{J\times L}$ is homogeneous of degree $m\in \mathbb{N}$, that is, each nonzero entry of the matrix $P(z)$ is a homogeneous polynomial of degree $m$.
In this case, it is well-known that the associated prime ideals $\mathfrak{p}_\sigma \subset \mathbb{C}[z]$ are homogeneous; see, for example, \cite[Sec.~3.5]{E1995}. In particular, each variety $V_{\mathfrak{p}_\sigma}$ is a complex cone, i.e. $\zeta \,V_{\mathfrak{p}_\sigma}\subset V_{\mathfrak{p}_\sigma}$ for all $\zeta\in \mathbb{C}$. The homogeneity of $P$ is compatible with the Euler operator $E_\lambda^x = x\cdot \nabla_x -\lambda$, where $\lambda\in \mathbb{C}$, in the sense that
\begin{align}\label{ssos}
    P(\partial_x) E^x_\lambda  = E^x_{\lambda-m} P(\partial_x)
\end{align}
which follows from $\partial_x^\alpha( x\cdot \nabla_x) = (x\cdot \nabla_x + |\alpha|)\partial_x^\alpha $. Here and in the sequel, scalar operators such as \(E_\lambda^x\) are understood to act componentwise on vector-valued spaces. For an open set $\Omega\subset \mathbb{R}^n$, we define the space of $\lambda$-homogeneous functions by
\begin{align*}
    H_\lambda(\Omega)\coloneqq\operatorname{ker}(E^x_\lambda:C^\infty(\Omega)\to C^\infty(\Omega)),
\end{align*}
so by (\ref{ssos}) we get
\begin{align*}
    P(\partial_x):H_\lambda(\Omega)^L\to H_{\lambda-m}(\Omega)^J.
\end{align*}

\subsection{Geometry of $\Omega$}\label{sec:geo}
In the following, we study the space of $\lambda$-homogeneous solutions
\begin{align*}
    S(\Omega)\cap H_\lambda(\Omega)^L.
\end{align*}
Since both $S(\Omega) = \operatorname{ker}( P(\partial_x))$ and $H_\lambda(\Omega)^L=\operatorname{ker}(E_\lambda^x)^L$ are closed subspaces of $C^\infty(\Omega)^L$, their intersection is again a closed subspace. We endow $S(\Omega)\cap H_\lambda(\Omega)^L$ with the induced Fréchet topology. As in the previous section, $\Omega\subset \mathbb{R}^n$ will be open and convex. Since homogeneity is naturally tied to dilation-invariant sets, it is natural to assume that $\Omega\subset \mathbb{R}^n$ is a cone, that is, $tx\in \Omega$ for $x\in \Omega$ and $t>0$.

\begin{remark}
Let \(\Sigma\coloneqq \Omega\cap \mathbb{S}^{n-1}\). The restriction map $H_\lambda(\Omega)\ni u\mapsto u|_\Sigma \in C^\infty(\Sigma)$ is an isomorphism with inverse given by
\[
C^\infty(\Sigma)\ni f\mapsto \bigl[x\mapsto |x|^\lambda f(x/|x|)\bigr]\in H_\lambda(\Omega).
\]
In particular, a \(\lambda\)-homogeneous function on \(\Omega\) is uniquely determined by its values on \(\Sigma\).
\end{remark}

If $0\in \Omega$, then $\Omega=\mathbb{R}^n$ for $\Omega$ an open convex cone. In that case, smoothness at the origin is very restrictive: a smooth $\lambda$-homogeneous function on $\mathbb{R}^n$ must be a polynomial. Consequently, for $\lambda\in\mathbb{N}_0$, the space $H_\lambda(\mathbb{R}^n)$ is a subspace of the finite-dimensional space of homogeneous polynomials of degree $\lambda$, and for $\lambda \in  \mathbb{C}\setminus \mathbb{N}_0$, we have $H_\lambda(\mathbb{R}^n) = \{0\}$. This observation illustrates that the genuinely interesting situation arises when $0\notin \Omega$. For reasons that become clear in a moment, we strengthen this slightly.
\begin{definition}
   A cone $\Omega\subset \mathbb{R}^n$ is called pointed if $\overline{\Omega} \cap (-\overline{\Omega}) = \{0\}$.
\end{definition}

Equivalently, $\Omega$ is pointed if and only if $\overline{\Omega}$ contains no line through the origin. In particular, if $\Omega$ is open and pointed, then $0\notin \Omega$. In the following, we say that $\Omega\subset \mathbb{R}^n$ is OPC if it is a nonempty open pointed convex cone.

\begin{example}\label{halfspace}
   The half-space $\Omega = \mathbb{R}^{n-1}\times \mathbb{R}_{>0}$ is an open convex cone and satisfies $0\notin \Omega$. However, it is not pointed since $\overline{\Omega} \cap (-\overline{\Omega}) = \mathbb{R}^{n-1}\times \{0\}$.
\end{example}

\subsection{Dual cones}
The exponentials $x\mapsto e^{\langle x,z\rangle}$ with parameter $z\in \mathbb{C}^n$ are the natural building blocks for the classical solution space $S(\Omega)$; see (\ref{basis1}). In the homogeneous setting, we seek analogous building blocks for $S(\Omega)\cap H_\lambda(\Omega)^L$. The guiding principle is that the Mellin transform converts exponentials into $\lambda$-homogeneous kernels. Taking $\beta=-\langle x,z\rangle$ in \eqref{mellitrafo}, the factor $e^{-t\beta}$ becomes $e^{t\langle x,z\rangle}$, which will be compatible with the exponentials in (\ref{basis1}). The condition $\Re \beta>0$ then reads $\langle x,\Re z\rangle<0$ for all $x\in \Omega$. This motivates introducing the following: For any nonempty subset $X\subset \mathbb{R}^n$, define the strict negative dual cone
\begin{align*}
    X^\vee \coloneqq\{\eta\in \mathbb{R}^n: \langle x,\eta\rangle <0\quad \forall x\in \overline{X}\setminus \{0\}\}\quad \text{and}\quad U_X\coloneqq \{z\in \mathbb{C}^n: \Re z\in X^\vee \}.
\end{align*}
The fundamental building blocks for $\lambda$-homogeneous functions will be
\begin{align}\label{bblock}
    \Omega \ni x\mapsto (-\langle x,z\rangle)^\lambda\quad \text{for }z\in U_\Omega.
\end{align}
Since \(\Re(-\langle x,z\rangle)>0\) for \(x\in \Omega\) and \(z\in U_\Omega\), this defines a smooth function on \(\Omega\), avoiding the branch cut $(-\infty,0]$. We record some geometric properties.

\begin{lemma}\label{geostruc}
Let \(\Omega\subset\mathbb R^n\) be OPC, and let
\(K\subset\mathbb R^n\) be compact and convex with \(0\notin K\). Then:
\begin{enumerate}
    \item \(U_\Omega\) and \(U_K\) are nonempty open real cones in \(\mathbb C^n\).
    \item We have $U_K=\{z\in\mathbb C^n:H_K(\Re z)<0\}$.
\end{enumerate}
\end{lemma}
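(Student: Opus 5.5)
The plan is to prove (2) first, because the description of \(U_K\) through the support function makes (1) for \(U_K\) almost immediate.

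For (2), note that \(K\) is closed, so \(\overline K=K\) and \(\overline K\setminus\{0\}=K\), since \(0\notin K\). Hence \(\eta\in K^\vee\) if and only if \(\langle x,\eta\rangle<0\) for all \(x\in K\). By compactness of \(K\) and continuity of \(x\mapsto\langle x,\eta\rangle\), the supremum \(H_K(\eta)\) is attained. It is therefore negative exactly when the linear form is negative at every point of \(K\). This gives \(K^\vee=\{\eta: H_K(\eta)<0\}\), and taking \(\eta=\Re z\) yields (2).

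For (1), the cone property is direct in both cases. If \(\Re z\in X^\vee\) and \(t>0\), then \(\langle x,t\Re z\rangle=t\langle x,\Re z\rangle<0\), so \(U_X\) is invariant under multiplication by positive reals.

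Openness of \(U_K\) follows from (2): \(H_K\) is a finite convex function on \(\mathbb R^n\), hence continuous. It is even Lipschitz with constant \(\max_{x\in K}|x|\). So \(\{H_K(\Re z)<0\}\) is open.

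For \(U_\Omega\), I would reduce to a compact base. Since \(\Omega\) is a cone, \(\overline\Omega\) is a closed convex cone, and the set \(\overline\Omega\setminus\{0\}\) is generated by positive multiples of \(C\coloneqq\overline\Omega\cap\mathbb S^{n-1}\). Thus \(\eta\in\Omega^\vee\) if and only if \(\langle x,\eta\rangle<0\) for all \(x\in C\), which is equivalent to \(\max_{x\in C}\langle x,\eta\rangle<0\), using compactness of \(C\). The function \(\eta\mapsto\max_{C}\langle x,\eta\rangle\) is again continuous, so \(\Omega^\vee\) is open and hence \(U_\Omega=\Omega^\vee+i\mathbb R^n\) is open.

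Nonemptiness is the main point, and it is where pointedness enters. I would use the separation theorem applied to the compact convex set \(\widetilde K\coloneqq\operatorname{conv}(C)\).

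The key claim is that \(0\notin\widetilde K\). Suppose instead that \(0=\sum_j t_jx_j\) with \(x_j\in C\), \(t_j>0\) and \(\sum_j t_j=1\). Then \(-t_1x_1=\sum_{j\ge2}t_jx_j\in\overline\Omega\), since \(\overline\Omega\) is a convex cone. So \(t_1x_1\in\overline\Omega\cap(-\overline\Omega)=\{0\}\), contradicting \(|x_1|=1\). (If there is only one term, then \(x_1=0\), which is impossible.)

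Strict separation of the point \(0\) from the compact convex set \(\widetilde K\) then gives \(\eta\) with \(\langle x,\eta\rangle<0\) on \(\widetilde K\supset C\). Hence \(\eta\in\Omega^\vee\), and \(U_\Omega\) is nonempty.

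For \(U_K\), the same separation applied directly to \(K\), using \(0\notin K\), gives \(\eta\) with \(H_K(\eta)<0\).

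I expect the only step needing care to be this use of pointedness to get \(0\notin\operatorname{conv}(C)\). Nonemptiness of \(U_\Omega\) genuinely fails without it, as the half-space in Example \ref{halfspace} shows: there the dual cone \(\Omega^\vee\) is empty.
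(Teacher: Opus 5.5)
Your proof is correct and follows essentially the same route as the paper. You prove (2) from attainment of the supremum on the compact set \(K\), obtain openness and nonemptiness of \(U_K\) from continuity of \(H_K\) and strict separation, and treat \(\Omega\) through the compact base \(\operatorname{conv}(\overline\Omega\cap\mathbb S^{n-1})\). The paper handles \(\Omega\) by noting \(U_\Omega=U_{K_\Omega}\), whereas you argue openness directly, and your explicit check that pointedness gives \(0\notin\operatorname{conv}(C)\) fills in a step the paper only asserts.
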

\begin{proof}
We first prove the identity for \(U_K\). Since suprema are attained on compacta, we have
\begin{align*}
    z\in U_K \iff \forall x\in K: \langle x, \Re z\rangle<0  \iff   \sup_{x\in K}\langle x,\Re z\rangle  < 0 \iff  H_K(\Re z) <0.
\end{align*}
This proves (2). In particular, \(U_K\) is open, because \(z\mapsto H_K(\Re z)\) is continuous.
It is a real cone by the homogeneity of the scalar product. It remains to prove nonemptiness. Since $K$ is compact, convex, and disjoint from $0$, strict separation gives $\eta\in \mathbb{R}^n$ and $c>0$ such that $\langle x,\eta\rangle<-c$ for all $x\in K$. Thus, $\eta\in K^\vee\subset U_K\neq \emptyset$. For $\Omega$, set $K_\Omega \coloneqq \operatorname{conv}(\overline{\Omega} \cap \mathbb{S}^{n-1})$. This set is compact and convex. Moreover, $0\notin K_\Omega$, otherwise pointedness of $\Omega$ would fail. Since $\Omega$ is a cone, we have $U_{K_\Omega} = U_\Omega$. Applying the preceding part to $K_\Omega$ completes the proof.
\end{proof}

Pointedness ensures nonemptiness in Lemma~\ref{geostruc}. By contrast, for the half-space in Example~\ref{halfspace}, one has \(\Omega^\vee=\emptyset\), hence \(U_\Omega=\emptyset\), and therefore the family of kernels in (\ref{bblock}) is empty.

\subsection{The $\lambda$-homogeneous solution family $S_{\mathcal{B},\lambda}(\Omega)$}\label{dafamilia}
Let $\Omega\subset \mathbb{R}^n$ be OPC. Define the following solution family
\begin{align*}
    S_{\mathcal{B},\lambda}(\Omega)\coloneqq \left\{ \Omega \ni x\mapsto \mathcal{B}_{\sigma i}(z,\partial_z) \left(-\langle x,z\rangle\right)^\lambda: \sigma=1,\dots,\mu;~i=1,\dots, \rho_\sigma,~z\in V_{\mathfrak{p}_\sigma}\cap U_\Omega \right\}.
\end{align*}
Compared to \eqref{basis1}, the exponential factor \(e^{\langle x,z\rangle}\) is replaced by the \(\lambda\)-homogeneous kernel \(\left(-\langle x,z\rangle\right)^\lambda\), and the parameter set is restricted to \(V_{\mathfrak{p}_\sigma}\cap U_\Omega\) so that the principal branch is well defined on \(\Omega\). It is possible that \(V_{\mathfrak p_\sigma}\cap U_\Omega\) is empty for some \(1\leq \sigma\leq \mu\). In that case the corresponding component contributes
no kernels to \(S_{\mathcal B,\lambda}(\Omega)\). Whether such components are irrelevant depends on their position relative to \(U_\Omega\), a point taken up in Section~\ref{sec:invis}.

The main goal of this work is to show that $ S_{\mathcal{B},\lambda}(\Omega)$ is dense in $S(\Omega) \cap H_\lambda(\Omega)^L$. Writing the elements in $S_{\mathcal{B},\lambda}(\Omega)$ more explicitly, we have
    \begin{align*}
         \mathcal{B}_{\sigma i}(z,\partial_z) \left(-\langle x,z\rangle\right)^\lambda = \sum_{|\alpha|\leq A} (-1)^{|\alpha|} (\lambda)_{|\alpha|}a_{\sigma i,\alpha}(z)  x^\alpha \left(-\langle x,z\rangle\right)^{\lambda-|\alpha|},
    \end{align*}
where $(\lambda)_{|\alpha|}\coloneqq \lambda(\lambda-1)\dots(\lambda-|\alpha|+1)$. In particular, the elements of \(S_{\mathcal B,\lambda}(\Omega)\) are finite linear combinations of terms
\(x^\alpha \left(-\langle x,z\rangle\right)^{\lambda-|\alpha|}\) with coefficients depending polynomially on \(z\).

\begin{proposition}\label{schaumal}
    Let $\lambda \in \mathbb{C}$ and $\Omega$ be OPC. Then $S_{\mathcal{B},\lambda}(\Omega)\subset S(\Omega) \cap H_\lambda(\Omega)^L$.
\end{proposition}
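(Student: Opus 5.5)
The plan is to verify the two defining properties separately for a fixed generator
\[
u(x)=\mathcal B_{\sigma i}(z,\partial_z)F(x,z),\qquad F(x,z)\coloneqq(-\langle x,z\rangle)^\lambda,\qquad z\in V_{\mathfrak p_\sigma}\cap U_\Omega .
\]
Throughout, I use that on $\Omega\times U_\Omega$ we have $\Re(-\langle x,z\rangle)>0$. Hence $F$ is smooth in $x$ and holomorphic in $z$, with jointly continuous derivatives of all orders. In particular, $\partial_x$ and $\partial_z$ derivatives commute, and $u\in C^\infty(\Omega)^L$.

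\emph{Homogeneity.} I use the explicit expansion stated just before the proposition:
\[
u(x)=\sum_{|\alpha|\le A}(-1)^{|\alpha|}(\lambda)_{|\alpha|}\,a_{\sigma i,\alpha}(z)\,x^\alpha(-\langle x,z\rangle)^{\lambda-|\alpha|}.
\]
For $t>0$ and $x\in\Omega$, the principal branch gives $(-\langle tx,z\rangle)^{\mu}=t^{\mu}(-\langle x,z\rangle)^{\mu}$, because $t>0$ does not change the argument. Hence each summand is homogeneous of degree $|\alpha|+\lambda-|\alpha|=\lambda$, so $u(tx)=t^\lambda u(x)$. Differentiating in $t$ at $t=1$ gives $E^x_\lambda u=0$ componentwise, i.e.\ $u\in H_\lambda(\Omega)^L$.

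\emph{Solution property.} First, by the chain rule and the homogeneity of $P$, for each entry $P_{jl}$ we have
\[
P_{jl}(\partial_x)F=P_{jl}(z)\,G(x,z),\qquad G(x,z)\coloneqq(-1)^m(\lambda)_m(-\langle x,z\rangle)^{\lambda-m},
\]
since $\partial_x^\beta(-\langle x,z\rangle)^\lambda=(-1)^{|\beta|}(\lambda)_{|\beta|}z^\beta(-\langle x,z\rangle)^{\lambda-|\beta|}$ and every monomial of $P_{jl}$ has degree $m$. Next, since $\partial_x$ and $\partial_z$ commute, the $j$-th component of $P(\partial_x)u$ is
\[
\sum_{l}\sum_{|\alpha|\le A}a^l_{\sigma i,\alpha}(z)\,\partial_z^\alpha\bigl[P_{jl}(z)G(x,z)\bigr]=\tilde{\mathcal B}_{\sigma i}(z,\partial_z)f_x(z),\qquad f_x\coloneqq P^t\,(e_jG(x,\cdot)),
\]
with $e_j$ the $j$-th unit vector of $\mathbb C^J$. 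For fixed $x\in\Omega$, the function $z\mapsto G(x,z)$ is holomorphic on $U_\Omega$. Hence $f_x\in P^t\mathcal O(U_\Omega)^J$.

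Finally, $U_\Omega$ is a tube domain over the convex set $\Omega^\vee$, which is an intersection of open half-spaces, hence convex. Moreover, $U_\Omega$ is nonempty by Lemma~\ref{geostruc}, so it is a domain of holomorphy. The ``only if'' direction of Theorem~\ref{makka}, applied with $U=U_\Omega$, then gives $\tilde{\mathcal B}_{\sigma i}(z,\partial_z)f_x(z)=0$ for all $z\in V_{\mathfrak p_\sigma}\cap U_\Omega$. This holds for every $j$ and every $x\in\Omega$, so $P(\partial_x)u=0$ and $u\in S(\Omega)$.

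The main point to check carefully is the bookkeeping in the identity $P(\partial_x)\mathcal B_{\sigma i}(z,\partial_z)F=\tilde{\mathcal B}_{\sigma i}(z,\partial_z)\bigl(P^t e_j G\bigr)$. One must confirm that the vector-valued operator $\mathcal B_{\sigma i}$ of \eqref{kermer2} followed by $P(\partial_x)$ matches the scalar operator $\tilde{\mathcal B}_{\sigma i}$ of \eqref{kermer} applied to the $L$-vector with components $P_{jl}(z)G$. It is also worth noting that Theorem~\ref{makka} must be applied on $U_\Omega$ rather than on $\mathbb C^n$, because $G(x,\cdot)$ is not entire. Beyond these checks, no condition on $\lambda$ is needed.
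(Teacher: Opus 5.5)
Your proof is correct and follows essentially the same route as the paper: commute $P(\partial_x)$ past $\mathcal B_{\sigma i}(z,\partial_z)$, use homogeneity of $P$ to rewrite the result as $\tilde{\mathcal B}_{\sigma i}$ applied to $P^t\bigl(e_j G(x,\cdot)\bigr)$, and invoke the ``only if'' direction of Theorem~\ref{makka}. The differences are only cosmetic and both valid: you check $\lambda$-homogeneity by scaling each term of the explicit expansion, where the paper commutes $E^x_\lambda$ with $\mathcal B_{\sigma i}(z,\partial_z)$, and you apply Theorem~\ref{makka} on the convex tube $U_\Omega$, where the paper uses a small ball $W\Subset U_\Omega$ around $z$.
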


\begin{proof}
Fix $1\leq \sigma \leq \mu$, $1\leq i\leq \rho_\sigma$ and $z\in V_{\mathfrak{p}_\sigma}\cap U_\Omega$, and set \( u(x)\coloneqq   \mathcal{B}_{\sigma i}(z,\partial_z) \left(-\langle x,z\rangle\right)^\lambda\). Note 
\begin{align*}
    (x\cdot \nabla_x) \left(-\langle x,z\rangle\right)^\lambda = \lambda  \left(-\langle x,z\rangle\right)^{\lambda}.
\end{align*}
Thus $E_\lambda^x \left(-\langle x,z\rangle\right)^\lambda=0$. Moreover, \(E_\lambda^x\) and \(\mathcal{B}_{\sigma i}(z,\partial_z)\) commute, since they act in different variables. Hence \(u\in H_\lambda(\Omega)^L\) since
    \begin{align*}
        E_\lambda^x u(x) = E_\lambda^x \mathcal{B}_{\sigma i}(z,\partial_z)  \left(-\langle x,z\rangle\right)^\lambda =  \mathcal{B}_{\sigma i}(z,\partial_z) E_\lambda^x  \left(-\langle x,z\rangle\right)^\lambda = 0.
    \end{align*}
To prove $u\in S(\Omega)$, fix $1\leq j\leq J$ and $x\in \Omega$. Calculate
    \begin{align}\nonumber
        &(P(\partial_x)  u)_j (x) = \sum_{l=1}^L P_{jl}(\partial_x)\mathcal{B}^l_{\sigma i}(z,\partial_z)   \left(-\langle x,z\rangle\right)^\lambda =  \sum_{l=1}^L \mathcal{B}^l_{\sigma i}(z,\partial_z)  P_{jl}(\partial_x) \left(-\langle x,z\rangle\right)^\lambda \\ 
        =& (-1)^m (\lambda)_m \sum_{l=1}^L \mathcal{B}^l_{\sigma i}(z,\partial_z)  \bigl( P_{jl}(z) \left(-\langle x,z\rangle\right)^{\lambda-m}\bigr) = (-1)^m (\lambda)_m \tilde{\mathcal B}_{\sigma i}(w,\partial_w)\bigl(P^t(w)g_{j,x}(w)\bigr)\Big|_{w=z}\label{konumber} 
    \end{align}
where we use that $P$ is homogeneous of degree $m$ and set $ g_{j,x}(w):=\left(-\langle x,w\rangle\right)^{\lambda-m} e_j$ for \(e_j\) the \(j\)-th standard basis vector in $\mathbb{C}^J$. Note that $g_{j,x}$ is well-defined and holomorphic for a small ball \(w\in W\Subset U_\Omega\) centered at \(z\). Since \(W\) is convex, it is a domain of holomorphy and (\ref{konumber}) vanishes by Theorem~\ref{makka}. As \(j\) and \(x\) were arbitrary, \(u\in S(\Omega)\).
\end{proof}

\section{Euler and Mellin-type operators}\label{EuM}
The purpose of this section is threefold. First, we describe the Fourier--Laplace transformed Euler operator $E_\lambda^z$ and invert it on \(\mathcal{O}(\mathbb{C}^n)\) for \(\lambda \notin \mathbb{N}_0\). Second, for \(K\Subset \Omega\) a compact convex set, we introduce the Mellin-type operator \(M_\lambda^z\) on the Paley--Wiener space \(\operatorname{PW}(K)\) which satisfies \(E_\lambda^z M_\lambda^z f = 0\) and sends the exponential kernel \(e^{\langle x,z\rangle}\) to the homogeneous kernel \((-\langle x,z\rangle)^\lambda\). Third, we prove Paley--Wiener estimates needed for the main result.

\subsection{The Euler operator on the Fourier--Laplace side}
We retain the notation $E_\lambda^z \coloneqq z\cdot \nabla_z-\lambda$ for $z\in \mathbb{C}^n$. Passing to the Fourier--Laplace transform, the transpose of the Euler operator becomes the Euler operator in the \(z\)-variables:
\begin{align*}
    \widehat{(E^x_\lambda)^t \nu }(z) = \nu\left(E^x_\lambda e^{\langle x,z\rangle}\right) = \nu\left(( x\cdot z - \lambda) e^{\langle x,z\rangle}\right) = \nu\left(( z\cdot \nabla_z  - \lambda) e^{\langle x,z\rangle}\right) =E^z_{\lambda} \hat{\nu}(z),\quad \nu\in \mathcal{E}'(\Omega).
\end{align*}
Since \(P^t(z)\) is homogeneous of degree \(m\), we have
\begin{align}\label{zweo}
   E^z_\lambda(P^t g)=P^t E^z_{\lambda-m} g\quad \text{for } g\in \mathcal{O}(\mathbb{C}^n)^J.
\end{align}
Next, we discuss the inverse of $ E^z_\lambda : \mathcal{O}(\mathbb{C}^n)\to \mathcal{O}(\mathbb{C}^n)$. Using the Taylor expansion, we write $\mathcal{O}(\mathbb{C}^n)\ni f = \sum_{k=0}^\infty f_k$ where each $f_k$ is a homogeneous polynomial of degree $k$ such that
\begin{align*}
    E^z_\lambda f = \sum_{k=0}^\infty (k-\lambda) f_k.
\end{align*}
The operator \(E^z_\lambda\) is invertible on \(\mathcal{O}(\mathbb{C}^n)\) precisely when \(\lambda\notin\mathbb{N}_0\). In this case, the inverse is given by
\begin{align}\label{nadelohr}
    (E^z_\lambda)^{-1} f = \sum_{k=0}^\infty \frac{f_k}{k-\lambda},
\end{align}
which is entire and thus $(E^z_\lambda)^{-1}:\mathcal{O}(\mathbb{C}^n)\to \mathcal{O}(\mathbb{C}^n)$ is well defined. Indeed, if the power series \(f(z)=\sum_{\alpha}c_\alpha z^\alpha\) has infinite radius of convergence, then so does $\sum_\alpha \frac{c_\alpha z^\alpha}{|\alpha|-\lambda}$.

Fix \(q\in \mathbb{N}_0\) such that \(q>\Re\lambda-1\). A useful representation of $(E^z_\lambda)^{-1}$ is given by
\begin{align}\label{reppi}
    (E^z_\lambda)^{-1} f(z)= \sum_{k=0}^q \frac{f_k(z)}{k-\lambda} +\int_{0}^{1} t^{-\lambda-1}\left( f(tz)-\sum_{k=0}^q t^{k} f_k(z)\right) dt.
\end{align}
To see this, note that
\[
f(tz)-\sum_{k=0}^q t^k f_k(z)=\sum_{k=q+1}^\infty t^k f_k(z) =O(t^{q+1})\quad \text{for }t\to 0
\]
and the integral in (\ref{reppi}) converges because \(q>\Re\lambda-1\). Integrating the homogeneous expansion term by term gives \eqref{nadelohr}.

\subsection{The operator $M_\lambda^z$}
Let $\lambda\in \mathbb{C}\setminus \mathbb{N}_0$ and $0\notin K\Subset \Omega$ be a compact convex set. Motivated by \eqref{mellitrafo} and \ref{reppi}, we define
\begin{align}\label{m2}
 M^z_\lambda :& \operatorname{PW}(K)\to \mathcal{O}(U_K),\quad M^z_\lambda  f(z)\coloneqq (E^z_\lambda)^{-1} f(z) + \int_{1}^{\infty} t^{-\lambda-1} f(tz) dt,\quad z\in U_K,
\end{align}
and briefly verify that $M_\lambda^z$ is well defined. Since $(E^z_\lambda)^{-1} f$ is entire, only the integral term requires justification. For this, consider $f\in \operatorname{PW}(K,N)$, that is,
\begin{align*}
   | f(z)|\leq C (1+|z|)^{N} e^{H_K(\Re z)}\quad \forall z\in \mathbb{C}^n\quad \text{for some }C>0 \text{ and } N\in \mathbb{N}_0.
\end{align*}
For $z\in U_K$ we have
\begin{align*}
   \int_{1}^{\infty} |t^{-\lambda-1} f(tz)| dt \leq C \int_{1}^{\infty} t^{-\Re \lambda-1} (1+t|z|)^{N} e^{ t H_K(\Re z)} 
   \leq  C  (1+|z|)^{N} \int_{1}^{\infty} t^{N-\Re \lambda-1} e^{t H_K(\Re z)}dt,
\end{align*}
where we have used  $(1+t|z|)^{N}\leq  t^{N} (1+|z|)^{N}$ for $t\geq 1$ and homogeneity of $H_K$. Due to $z\in U_K$, $H_K(\Re z)<0$ by Lemma \ref{geostruc} such that the integral in (\ref{m2}) is absolutely convergent, and thus, $M^z_\lambda  f(z)$ is well defined. If \(Q\Subset U_K\) is compact, then $\sup_{z\in Q} H_K(\Re z)<0$, so the same estimate is uniform on \(Q\), and absolute convergence remains true after differentiating under the integral sign, since any derivative $\partial_z^\alpha$ produces only an additional factor \(t^{|\alpha|}\). Hence \(M^z_\lambda f\in \mathcal{O}(U_K)\). Moreover, we can move differential operators under the integral in future computations.

For fixed \(x\in K\), the exponential \(z\mapsto e^{\langle x,z\rangle}\) belongs to \(\operatorname{PW}(K)\). Equivalently, it is the Fourier--Laplace transform of the Dirac mass \(\delta_x\in \mathcal E'(K)\). Hence, \(M_\lambda^z e^{\langle x,z\rangle}\) is well defined for \(z\in U_K\), and \eqref{mellitrafo}, with \(\beta=-\langle x,z\rangle\), yields
\begin{align*}
    M_\lambda^z e^{\langle x,z\rangle} =\Gamma(-\lambda)(-\langle x,z\rangle)^\lambda .
\end{align*}
Consequently, for $\nu\in \mathcal E'(K)$, the following fundamental identity holds, similar to the one in (\ref{kllae}):
\begin{align}
\tilde{\mathcal{B}}_{\sigma i}(z,\partial_z)\, M_\lambda^z \hat{\nu}(z)
= \nu\!\left(\mathcal{B}_{\sigma i}(z,\partial_z)\, M_\lambda^z e^{\langle x,z\rangle}\right) = \Gamma(-\lambda)\, \nu\!\left(\mathcal{B}_{\sigma i}(z,\partial_z)\, (-\langle x,z\rangle)^\lambda\right),
\quad z \in U_K.
\label{mellin-distribution-identity}
\end{align}

\begin{lemma}
Let $\lambda \in \mathbb{C}\setminus \mathbb{N}_0$ and $0\notin K\subset \mathbb{R}^n$ be a compact convex set. Then $E_\lambda^z M_\lambda^z f=0$ on $U_K$ for $f\in \operatorname{PW}(K)$.
\end{lemma}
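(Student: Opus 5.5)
The plan is to split $M_\lambda^z f$ into its two pieces from \eqref{m2} and compute $E_\lambda^z$ of each directly. For the entire part we have, by construction, $E_\lambda^z (E_\lambda^z)^{-1} f = f$ on $\mathbb{C}^n$, hence in particular on $U_K$; this follows from \eqref{nadelohr}. For the integral part $I(z)\coloneqq\int_1^\infty t^{-\lambda-1} f(tz)\,dt$ we use the scaling identity
\begin{align*}
    z\cdot\nabla_z \bigl(f(tz)\bigr) = t\,\frac{d}{dt}\bigl(f(tz)\bigr),
\end{align*}
which is just the chain rule. By the discussion following \eqref{m2}, differentiation may be moved under the integral on compact subsets of $U_K$. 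Hence
\begin{align*}
    E_\lambda^z I(z) = \int_1^\infty \Bigl( t^{-\lambda}\frac{d}{dt} f(tz) - \lambda t^{-\lambda-1} f(tz)\Bigr)dt = \int_1^\infty \frac{d}{dt}\bigl(t^{-\lambda} f(tz)\bigr)\,dt .
\end{align*}

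The integrand is an exact derivative, so $E_\lambda^z I(z) = \lim_{T\to\infty} T^{-\lambda} f(Tz) - f(z)$. The Paley--Wiener bound used to justify \eqref{m2} gives
\begin{align*}
    |T^{-\lambda} f(Tz)| \le C\, T^{N-\Re\lambda}(1+|z|)^N e^{T H_K(\Re z)},
\end{align*}
and by Lemma~\ref{geostruc}(2) we have $H_K(\Re z)<0$ for $z\in U_K$. The exponential decay therefore kills the boundary term at infinity, and $E_\lambda^z I(z) = -f(z)$.

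Adding the two contributions gives $E_\lambda^z M_\lambda^z f = f - f = 0$ on $U_K$. The only step needing care is the exchange of $E_\lambda^z$ with the improper integral together with the vanishing of the boundary term at $t=\infty$. Both are handled by the locally uniform estimate already established after \eqref{m2}: on compact $Q\Subset U_K$ we have $\sup_Q H_K(\Re z)<0$, which gives exponential decay of the integrand and of its $z$-derivatives, the latter picking up only polynomial factors $t^{|\alpha|}$. I expect no genuine obstacle beyond this.
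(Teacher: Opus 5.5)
Your proposal is correct and follows essentially the same route as the paper: you use $E_\lambda^z(E_\lambda^z)^{-1}f=f$, rewrite the differentiated integral as $\int_1^\infty \frac{d}{dt}\bigl(t^{-\lambda}f(tz)\bigr)\,dt$, and kill the boundary term at infinity with the Paley--Wiener bound and $H_K(\Re z)<0$ on $U_K$. The only difference is bookkeeping: you keep the two pieces separate ($f$ and $-f$), whereas the paper combines them into the single limit $\lim_{t\to\infty}t^{-\lambda}f(tz)$.
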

\begin{proof}
For every $t>0$, the chain rule along rays gives $t\frac{d}{dt}f(tz) = (w\cdot \nabla_w f(w))|_{w=tz}$. Hence, by the product rule
\begin{align*}
    \frac{d}{dt}(t^{-\lambda} f(tz)) = t^{-\lambda-1}(E^z_\lambda f)(tz).
\end{align*}
Now take $z\in U_K$. Differentiating under the integral and using $E_\lambda^z[f(tz)] = (E_\lambda^z f)(tz)$, we obtain
\begin{align}\label{argoo}
     E^z_\lambda M^z_\lambda f(z) = f(z) + \int_1^\infty t^{-\lambda-1} (E^z_\lambda f)(tz) =  f(z) + \int_1^\infty \frac{d}{dt}(t^{-\lambda} f(tz)) dt= \lim_{t\to \infty}t^{-\lambda }f(tz).
\end{align}
It remains to show that the limit in (\ref{argoo}) is zero. Since $f\in \operatorname{PW}(K)$, there is $C>0$ and $N\in \mathbb{N}_0$ such that
\[
|t^{-\lambda}f(tz)|
\le Ct^{-\Re \lambda}(1+t|z|)^N e^{t H_K(\Re z)}
\to 0\quad \text{for }t\to \infty,
\]
where the convergence is due to $H_K(\Re z)<0$ by Lemma \ref{geostruc}.
\end{proof}

\subsection{Paley--Wiener estimates}
Although \(E_\lambda^z\) is invertible on \(\mathcal O(\mathbb C^n)\) for $\lambda\notin \mathbb{N}_0$, the inverse \((E_\lambda^z)^{-1}\) does not preserve \(\operatorname{PW}(K)\) in general. Rather, it produces Paley--Wiener growth associated with \(\operatorname{conv}(K\cup\{0\})\). This is in contrast with \( E_\lambda^z \operatorname{PW}(K)\subset \operatorname{PW}(K)\), which follows from (\ref{laberer}).

\begin{lemma}\label{chito}
Let $\lambda \in \mathbb{C}\setminus \mathbb{N}_0$, and let $K\subset \mathbb{R}^n$ be a compact convex set. Then
\[
(E^z_\lambda)^{-1} \operatorname{PW}(K)\subset \operatorname{PW}(\operatorname{conv}(K\cup \{0\})).
\]
\end{lemma}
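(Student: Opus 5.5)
We will estimate $(E^z_\lambda)^{-1}f$ directly from the integral representation \eqref{reppi}, fixing $q\in\mathbb N_0$ with $q>\Re\lambda-1$. Let $f\in\operatorname{PW}(K,N)$, so $|f(z)|\le C(1+|z|)^N e^{H_K(\Re z)}$. Write $K_0\coloneqq\operatorname{conv}(K\cup\{0\})$. Its support function is $H_{K_0}(w)=\max(H_K(w),0)$. In particular $H_K(t\Re z)=tH_K(\Re z)\le H_{K_0}(\Re z)$ for $t\in[0,1]$, and $H_{K_0}\ge 0$. The goal is to show that $(E^z_\lambda)^{-1}f\in\operatorname{PW}(K_0,N')$ for some $N'$ depending on $N$ and $q$. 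We treat the two pieces of \eqref{reppi} separately.

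\emph{Finite sum.} The homogeneous parts $f_k$ are polynomials of degree $k\le q$, so $\sum_{k\le q}f_k/(k-\lambda)$ is a polynomial of degree at most $q$. Since $e^{H_{K_0}(\Re z)}\ge 1$, it is bounded by $C'(1+|z|)^q e^{H_{K_0}(\Re z)}$. Thus this term lies in $\operatorname{PW}(K_0,q)$. Explicit bounds on the coefficients are not needed here: the $f_k$ are fixed polynomials.

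\emph{Integral term.} This is the main obstacle. The integrand $t^{-\lambda-1}(f(tz)-\sum_{k\le q}t^kf_k(z))$ has to be bounded uniformly with the correct exponential weight, and also with enough vanishing at $t=0$ to be integrable against $t^{-\Re\lambda-1}$. Splitting into $f(tz)$ and the Taylor polynomial separately fails when $\Re\lambda\ge 0$, because $t^{-\lambda-1}f(tz)$ is not integrable at $0$. I would therefore use the Taylor remainder in $t$. Set $g(s)\coloneqq f(sz)$; the subtracted sum is exactly the degree-$q$ Taylor polynomial of $g$ at $s=0$. By Taylor's formula with integral remainder,
\[
f(tz)-\sum_{k=0}^q t^kf_k(z)=\frac{t^{q+1}}{q!}\int_0^1(1-s)^q\,g^{(q+1)}(st)\,ds,\qquad g^{(q+1)}(r)=\sum_{|\alpha|=q+1}\tfrac{(q+1)!}{\alpha!}z^\alpha(\partial^\alpha f)(rz).
\]
By \eqref{laberer}, each $\partial^\alpha f$ lies in $\operatorname{PW}(K,N_1)$ for some $N_1$. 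Hence for $r\in[0,1]$,
\[
|g^{(q+1)}(r)|\le C''|z|^{q+1}(1+|z|)^{N_1}e^{rH_K(\Re z)}\le C''(1+|z|)^{q+1+N_1}e^{H_{K_0}(\Re z)}.
\]
Inserting this bound, the integral term is dominated by
\[
C'''(1+|z|)^{q+1+N_1}e^{H_{K_0}(\Re z)}\int_0^1 t^{q-\Re\lambda}\,dt,
\]
and the $t$-integral is finite because $q>\Re\lambda-1$.

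Combining both pieces gives $(E^z_\lambda)^{-1}f\in\operatorname{PW}(K_0,\max(q,q+1+N_1))\subset\operatorname{PW}(K_0)$. Since \eqref{reppi} holds with the same $q$ for every $f$, this proves the inclusion.

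If one prefers to avoid \eqref{laberer}, Cauchy estimates on polydiscs of radius $1$ around $sz$ give the same derivative bound. The exponential weight changes only by a bounded factor $e^{\sqrt n\,\max_{x\in K}|x|}$, and this can be absorbed into the constant.
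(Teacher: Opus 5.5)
Your proof is correct and follows essentially the paper's argument: the representation \eqref{reppi} with $q>\Re\lambda-1$, the finite sum treated as a polynomial, and the Taylor integral remainder of $g(s)=f(sz)$, bounded via $e^{sH_K(\Re z)}\le e^{\max(0,H_K(\Re z))}=e^{H_{\operatorname{conv}(K\cup\{0\})}(\Re z)}$ and $\int_0^1 t^{q-\Re\lambda}\,dt<\infty$. Your explicit formula $g^{(q+1)}(r)=\sum_{|\alpha|=q+1}\frac{(q+1)!}{\alpha!}z^\alpha(\partial^\alpha f)(rz)$ is the careful form of the paper's shorthand $g^{(k)}(s)=(z\cdot\nabla_z)^k f(sz)$, where $z\cdot\nabla$ must be read as differentiation in the frozen direction $z$ rather than as the Euler operator evaluated at $sz$; the extra factor $|z|^{q+1}$ you keep is therefore genuinely needed, though it only raises the polynomial order.
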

\begin{proof}
Take $f\in \operatorname{PW}(K)$, fix $q>\Re \lambda -1$ and consider the representation (\ref{reppi}). Therein, the finite sum is a polynomial, hence belongs to $\operatorname{PW}(\{0\})$. For the integral term, apply Taylor's remainder theorem to the one-variable function $g(s)\coloneqq f(sz)$. Because $g^{(k)}(s) = (z\cdot \nabla_z)^k f(sz)$, we obtain
\begin{align}\label{talya}
    f(tz)-\sum_{k=0}^q t^{k} f_k(z) = \frac{t^{q+1}}{q!}\int_0^1 (1-s)^q (z\cdot \nabla_z)^{q+1}f(stz) ds.
\end{align}
Since $(z\cdot \nabla_z)^{q+1}f\in \operatorname{PW}(K)$ due to (\ref{laberer}), there exists $C>0$ and $N\in \mathbb{N}_0$ such that 
\begin{align}\label{somate}
    |(z\cdot \nabla_z)^{q+1}f(z)|\leq C (1+|z|)^N e^{H_K(\Re z)}\quad \forall z\in \mathbb{C}^n.
\end{align}
Using (\ref{talya}) and (\ref{somate}), we estimate for $z\in \mathbb{C}^n$:
\begin{align*}
&\Big|\int_{0}^{1} t^{-\lambda-1}\Big( f(tz)-\sum_{k=0}^q t^{k} f_k(z)\Big) dt\Big|\leq \int_{0}^{1} \frac{t^{-\Re \lambda+q}}{q!} \int_0^1 (1-s)^q|(z\cdot \nabla_z)^{q+1}f(stz) |ds dt\\
\leq & C \int_{0}^{1} \frac{t^{-\Re \lambda+q}}{q!} \int_0^1 (1+st|z|)^{ N}  e^{st H_K(\Re z)} ds dt \leq C (1+|z|)^{ N} e^{\max(0, H_K(\Re z))} \int_{0}^{1} \frac{t^{-\Re \lambda+q}}{q!} dt \\
= &\frac{C}{q! (q+1-\Re \lambda)} (1+|z|)^{ N} e^{\max(0, H_K(\Re z))},
\end{align*}
where we have used $0\leq s,t\leq 1$ and $e^{st H_K(\Re z)}\leq e^{\max(0, H_K(\Re z))}$ for the third inequality. Since $\max(0,H_K(w))=H_{\operatorname{conv}(K\cup\{0\})}(w)$, we obtain $(E^z_\lambda)^{-1} f\in \operatorname{PW}(\operatorname{conv}(K\cup\{0\}))$.
\end{proof}

\begin{lemma}\label{chito2}
Let $\lambda \in \mathbb{C}\setminus \mathbb{N}_0$, and let $0\notin K\subset \mathbb{R}^n$ be a compact convex set. Consider $f\in \operatorname{PW}(K)^L$, an arbitrary subset $V\subset \mathbb{C}^n$, and let $\tilde{\mathcal{B}}(z, \partial_z)$ be a differential operator of the form (\ref{kermer}):
\begin{align*}
    \tilde{\mathcal{B}}(z, \partial_z) = \sum_{|\alpha|\leq A}a_\alpha(z) \partial_z^\alpha,\quad a_\alpha \in \mathbb{C}[z]^L.
\end{align*}

Assume $\tilde{\mathcal{B}}(z,\partial_z) M^z_\lambda   f(z)=0$ for all $z\in V\cap U_K$. Then there exists $N\in \mathbb{N}_0$ such that
    \begin{align*}
        \sup_{z\in V} \big| \tilde{\mathcal{B}}(z,\partial_z) (E^z_\lambda)^{-1} f(z)\big| w_{K,N}(z)<\infty.
    \end{align*}
\end{lemma}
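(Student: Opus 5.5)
The plan is to split $V$ according to the sign and size of $H_K(\Re z)$. In the region where the weight $e^{-H_K(\Re z)}$ is harmless, Lemma~\ref{chito} alone gives the bound. Where the weight is large, the hypothesis $\tilde{\mathcal B}M_\lambda^z f=0$ lets us trade $(E_\lambda^z)^{-1}f$ for the tail integral in \eqref{m2}, which decays exponentially there.

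\emph{Step 1 (region $H_K(\Re z)\ge -1$).} By Lemma~\ref{chito}, each component of $(E^z_\lambda)^{-1}f$ lies in $\operatorname{PW}(\operatorname{conv}(K\cup\{0\}))$. By \eqref{laberer}, applying $\tilde{\mathcal B}(z,\partial_z)$ keeps us in this space. Since $H_{\operatorname{conv}(K\cup\{0\})}=\max(0,H_K)$, we obtain $C,N_1$ with
\[
|\tilde{\mathcal B}(z,\partial_z)(E^z_\lambda)^{-1}f(z)|\le C(1+|z|)^{N_1}e^{\max(0,H_K(\Re z))}\quad\text{for all } z\in\mathbb C^n.
\]
If $H_K(\Re z)\ge 0$, the right-hand side equals $C(1+|z|)^{N_1}e^{H_K(\Re z)}$. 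If $-1\le H_K(\Re z)<0$, it is at most $e\,C(1+|z|)^{N_1}e^{H_K(\Re z)}$. In both cases the desired bound holds on this part of $V$.

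\emph{Step 2 (region $H_K(\Re z)<-1$).} By Lemma~\ref{geostruc}, such $z$ lie in $U_K$. For $z\in V\cap U_K$, the definition \eqref{m2} and the hypothesis give
\[
\tilde{\mathcal B}(z,\partial_z)(E^z_\lambda)^{-1}f(z)=-\tilde{\mathcal B}(z,\partial_z)\int_1^\infty t^{-\lambda-1}f(tz)\,dt .
\]
Differentiation under the integral is justified as discussed after \eqref{m2}. It produces the finite sum
\[
\sum_{|\alpha|\le A}\sum_{l=1}^L a^l_\alpha(z)\int_1^\infty t^{-\lambda-1+|\alpha|}(\partial^\alpha f^l)(tz)\,dt .
\]
Each $\partial^\alpha f^l$ lies in $\operatorname{PW}(K,N_2)$ for a common $N_2$, again by \eqref{laberer}. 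Using $(1+t|z|)^{N_2}\le t^{N_2}(1+|z|)^{N_2}$ for $t\ge1$ and the homogeneity of $H_K$, each integral is bounded by
\[
C(1+|z|)^{N_2}\int_1^\infty t^{M}e^{tH_K(\Re z)}\,dt,\qquad M:=N_2+A-\Re\lambda-1 .
\]

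\emph{Step 3 (key estimate).} This is the point I expect to be the main obstacle. Near $\partial U_K$, that is as $H_K(\Re z)\to 0^-$, the integral blows up like $|H_K(\Re z)|^{-M-1}$. This blow-up is exactly why Step 1 has to absorb the strip $-1\le H_K<0$. On $H:=H_K(\Re z)<-1$ we instead write $e^{tH}=e^{H}e^{(t-1)H}\le e^{H}e^{-(t-1)}$. This gives
\[
\int_1^\infty t^{M}e^{tH}\,dt\le e^{H}\int_1^\infty t^{M}e^{-(t-1)}\,dt=C'e^{H_K(\Re z)} .
\]
Multiplying by the polynomial coefficients $a_\alpha^l(z)$ raises the power of $(1+|z|)$ by at most their maximal degree $d$. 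We obtain $|\tilde{\mathcal B}(E^z_\lambda)^{-1}f(z)|\le C''(1+|z|)^{N_2+d}e^{H_K(\Re z)}$ on this region.

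Taking $N:=\max(N_1,N_2+d)$ and combining Steps 1--3 yields the claimed uniform bound on all of $V$.
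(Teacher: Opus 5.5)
Your proposal is correct and takes essentially the same route as the paper. Both split $V$ at $H_K(\Re z)=-1$. On $\{H_K(\Re z)\ge -1\}$ both use Lemma~\ref{chito} together with $e^{\max(0,H_K)}\le e\,e^{H_K}$, and on the complementary region both replace $(E^z_\lambda)^{-1}f$ by the tail integral and factor $e^{tH}=e^{H}e^{(t-1)H}$ to get a uniformly bounded integral.
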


To motivate this result, consider first the simple case $L=1$, $V=\mathbb{C}^n$ and $\tilde{\mathcal{B}}=1$. Then the statement reduces to the following: if $f\in \operatorname{PW}(K)$ and $M^z_\lambda f=0$, then $(E^z_\lambda)^{-1} f\in \operatorname{PW}(K)$. So the obstruction to invertibility of $(E^z_\lambda)^{-1}$ within $\operatorname{PW}(K)$ is measured by $M^z_\lambda$.

\begin{proof}
Choose constants $C_f, C_\mathcal{B}>0$ and $N_f, N_\mathcal{B}\in \mathbb{N}_0$ such that 
\begin{align*}
    |\partial_z^\alpha f(z)|\leq C_f(1+|z|)^{N_f} e^{H_K(\Re z)},\quad |a_\alpha(z)|\leq C_\mathcal{B}(1+|z|)^{N_\mathcal{B}}\quad \forall |\alpha|\leq A,~z\in \mathbb{C}^n.
\end{align*}
We split the estimate into two regimes:
\begin{align*}
    V = \left(V\cap \{H_K(\Re z)\leq -1\}\right) \cup \left(V\cap \{H_K(\Re z)\geq -1 \}\right).
\end{align*}
First, consider $z\in V\cap \{H_K(\Re z)\leq -1\}$, so $z\in U_K$ by Lemma \ref{geostruc}. By the assumption and the definition of $M^z_\lambda$,
\begin{align*}
   \tilde{\mathcal{B}}(z,\partial_z) (E^z_\lambda)^{-1} f(z) = - \tilde{\mathcal{B}}(z,\partial_z) \int_1^\infty t^{-\lambda-1}f(tz) dt.
\end{align*}
Differentiating under the integral sign yields
    \begin{align*}
        \left| \tilde{\mathcal{B}}(z,\partial_z) (E^z_\lambda)^{-1} f(z)\right| 
        \leq  &\sum_{|\alpha|\leq A} \int_1^\infty \left|  t^{-\lambda-1+|\alpha|}  a_\alpha(z) (\partial^\alpha_z f)(tz) \right|dt\\ 
        \leq  & C_f C_\mathcal{B} \sum_{|\alpha|\leq A} \int_1^\infty   t^{-\Re \lambda-1+|\alpha|}  (1+|z|)^{N_\mathcal{B}} (1+t|z|)^{N_f} e^{tH_K(\Re z)} dt\\ 
        \leq  & C_f C_\mathcal{B} (1+|z|)^{N_\mathcal{B}+N_f} e^{H_K(\Re z)}  \sum_{|\alpha|\leq A}  \int_1^\infty   t^{N_f+|\alpha|-\Re \lambda-1}   e^{(t-1)H_K(\Re z)} dt.
    \end{align*}
    Since $H_K(\Re z)\leq -1$, the last integral is bounded uniformly in \(z\). Hence, there is $C_1>0$ such that
\begin{align}\label{1}
    \left| \tilde{\mathcal{B}}(z,\partial_z) (E^z_\lambda)^{-1} f(z)\right| 
    \le
    C_1(1+|z|)^{N_\mathcal{B}+N_f}e^{H_K(\Re z)}\quad
    \text{for } z\in V\cap \{H_K(\Re z)\leq -1\}.
\end{align}
Next, set $K_0\coloneqq\operatorname{conv}(K\cup \{0\})$. By Lemma \ref{chito} we have $(E^z_\lambda)^{-1} f\in \operatorname{PW}(K_0)$ and by (\ref{laberer}) also $\tilde{\mathcal{B}}(z,\partial_z) (E^z_\lambda)^{-1} f\in \operatorname{PW}(K_0)$. So there is $C_2>0$ and $N_2\in \mathbb{N}_0$ such that
\begin{align*}
    | \tilde{\mathcal{B}}(z,\partial_z) (E^z_\lambda)^{-1} f(z)| \le C_2(1+|z|)^{N_2} e^{\max\{0,H_K(\Re z)\}}\quad \forall z\in \mathbb{C}^n.
\end{align*}
On $\{H_K(\Re z)\geq -1\}$ we have $e^{\max\{0,H_K(\Re z)\}}\leq e e^{H_K(\Re z)}$, so we can estimate
\begin{align}\label{2}
    | \tilde{\mathcal{B}}(z,\partial_z) (E^z_\lambda)^{-1} f(z)|
\le
C_2(1+|z|)^{N_2} e ~e^{H_K(\Re z)} \quad \text{for }z\in \{ H_K(\Re z) \geq -1\}.
\end{align}
Combining (\ref{1}) and (\ref{2}) for $V\cap \{-1\leq H_K(\Re z)\}$, we can choose $N=\max(N_\mathcal{B}+N_f,N_2)$ so
\begin{align*}
    \sup_{z\in V} | \tilde{\mathcal{B}}(z,\partial_z) (E^z_\lambda)^{-1} f(z)| w_{K,N}(z)\leq \max(C_1, C_2 e)<\infty.
\end{align*}
\end{proof}

Lemma \ref{chito2} is the key estimate that converts vanishing of $\tilde{\mathcal{B}}(z,\partial_z) M^z_\lambda   f $ on $V\cap U_K$ into the Paley--Wiener bounds required for Theorem~\ref{useituseit}. In particular, the assertion also covers the case $V\cap U_K=\emptyset$, where the vanishing assumption is vacuous. In this case the first region is empty, since \(\{H_K(\Re z)\le -1\}\subset U_K\).

\section{The main result}\label{lastly}

\subsection{Invisible components}\label{sec:invis}

The proof of the main theorem will follow the same duality strategy as Theorem~\ref{wotuw}, but in the \(\lambda\)-homogeneous setting a new geometric issue appears. The kernels \((-\langle x,z\rangle)^\lambda\) are defined on $\Omega$ only for parameters \(z\in U_\Omega\). Consequently, for each characteristic component \(V_{\mathfrak p_\sigma}\), the Mellin construction only detects the subset $ V_{\mathfrak p_\sigma}\cap U_\Omega$, which we call the visible part of \(V_{\mathfrak p_\sigma}\) with respect to $\Omega$. The following examples illustrate why this visible part may or may not contain sufficient information for the duality argument. The discussion culminates in a visibility assumption relating the characteristic components \(V_{\mathfrak p_\sigma}\) to the cone \(\Omega\).

\begin{example}\label{ex:1}
Consider the OPC domain $\Omega=\{x\in\mathbb R^2: x_2>|x_1|\}$ and the homogeneous irreducible polynomial $P(z)=z_1-z_2$. Then $\mu=1$,
\[
    \Omega^\vee =\{\eta\in\mathbb R^2: \eta_2<-|\eta_1|\},\quad \text{and}\quad  V_{\mathfrak{p}_1} = \{z\in\mathbb C^2: z_1=z_2\}.
\]
Let $z = (u,u)\in V_{\mathfrak{p}_1}$. Then $\Re z=(\Re u,\Re u)$ and $z\in U_\Omega$ would require $\Re u<-|\Re u|$, which is impossible. Hence $V_{\mathfrak{p}_1}\cap U_\Omega=\emptyset$, so $V_{\mathfrak{p}_1}$ is not visible with respect to $\Omega$. Nevertheless, \(H_\lambda(\Omega)\cap S(\Omega)\) contains nontrivial elements. Indeed, for $\lambda\in \mathbb{C}$, the function
\begin{align}\label{soluvisu}
    \Omega \ni x\mapsto u(x) = (-\langle x,z\rangle)^\lambda = (x_1+x_2)^\lambda  \quad \text{for}\quad z=-(1,1)
\end{align}
is well defined since $x_1+x_2>0$ on $\Omega$. Moreover, $u$ is $\lambda$-homogeneous and satisfies $(\partial_1-\partial_2) u=0$ such that $u\in H_\lambda(\Omega)\cap S(\Omega)$. The corresponding parameter $z=-(1,1)$ belongs to $V_{\mathfrak{p}_1} \cap \partial U_\Omega$. Thus, although $V_{\mathfrak{p}_1}\cap U_\Omega=\emptyset$, the characteristic variety still meets the boundary of $U_\Omega$.
\end{example}

The last example raises the question of whether one should enlarge \(\Omega^\vee\) to \(\overline{\Omega^\vee}\setminus\{0\}\) in the definition of \(U_\Omega\), so that boundary parameters become visible. The next example shows that a similar effect can also be obtained by slightly shrinking $\Omega$.

\begin{example}\label{ex:2}
Consider again $P(z) = z_1 - z_2$, but now on the OPC cone $\Omega_r=\{x\in\mathbb R^2: x_2>r |x_1|\}$ for $r>0$. Then
\begin{align*}
    \Omega_r^\vee =\{\eta\in\mathbb R^2: r \eta_2<-|\eta_1|\}.
\end{align*}
For $r>1$, we have $\Omega_r\subset \Omega_1$ and the parameter $z = -(1,1)$ satisfies $z\in V\cap U_{\Omega_r}$. Thus, the solution (\ref{soluvisu}) which was only associated with a boundary parameter for $\Omega_1$, becomes visible after shrinking the cone.

For $r<1$, the situation changes in the opposite direction: we have $\Omega_r \supset  \Omega_1 $ and $z=-(1,1)$ is no longer contained in $\overline{\Omega_r^\vee}$. In this case, one can even show that $H_\lambda(\Omega_r) \cap S(\Omega_r)=\{0\}$ for $\lambda \in \mathbb{C}\setminus \mathbb{N}_0$. Indeed, every smooth solution of $(\partial_1 - \partial_2) u = 0$ is of the form $u(x_1,x_2) = F(x_1+x_2) $ for some smooth one-variable function $F$. If $r<1$, the hyperplane $\{x\in \mathbb{R}^2: x_1+x_2=0\}$ intersects the interior of $\Omega_r$. Hence, $F$ must be smooth around $0$. But a smooth $\lambda$-homogeneous function near the origin is identically zero unless $\lambda \in \mathbb{N}_0$. Therefore, $u=0$.
\end{example}

Example~\ref{ex:2} indicates that Example~\ref{ex:1} is a borderline phenomenon. After a small perturbation of the cone, either \(V_{\mathfrak p_1}\) becomes visible in \(U_\Omega\), as happens when the cone is shrunk, or \(V_{\mathfrak p_1}\) does not contribute to the \(\lambda\)-homogeneous solution space, as happens when the cone is enlarged.

As already mentioned, rather than perturbing \(\Omega\), one might try to recover solutions by allowing \(\Re z\in \overline{\Omega^\vee}\setminus\{0\}\). The next example shows that this strategy does not work in general.

\begin{example}\label{ex:3}
Consider
\begin{align*}
\Omega=\Bigl\{x\in\mathbb R^4: x_4>\sqrt{x_1^2+x_2^2+x_3^2}\Bigr\},
\qquad
P(z)=
\begin{pmatrix}
z_2-i z_1\\
z_4-z_3
\end{pmatrix}.
\end{align*}
A direct computation shows
$$
 V_{\mathfrak p_1}=\{(u,iu,v,v): u,v\in\mathbb C\},\quad \Omega^\vee=
\Bigl\{\eta\in\mathbb R^4:\eta_4< -\sqrt{\eta_1^2+\eta_2^2+\eta_3^2}\Bigr\}.
$$

First, note that $ V_{\mathfrak p_1}\cap U_\Omega = \emptyset$. Indeed, for $z=(u,iu,v,v)\in V_{\mathfrak p_1}$, the condition $\Re z\in \Omega^\vee$ becomes
\begin{align*}
    \Re v < -\sqrt{(\Re u)^2 + (\Im u)^2 + (\Re v)^2} 
\end{align*}
which is impossible. On the other hand,
\begin{align*}
V_{\mathfrak p_1}\cap \overline{U_\Omega} = \{(0,0,v,v): \Re v\leq 0\}.
\end{align*}
Thus only the slice $\{u=0\}$ of $V_{\mathfrak p_1}$ meets the boundary of $U_\Omega$. Now introduce the following coordinates adapted to $P$:
\begin{align*}
        w:=x_1+i x_2,\qquad y:=x_3+x_4.
\end{align*}

Since $x\in\Omega$ implies $y>0$, the function 

\begin{align*}
    \Omega \ni x\mapsto u(x) = (-\langle x,z\rangle)^\lambda = y^\lambda  \quad \text{for}\quad z=-(0,0,1,1)
\end{align*}
is well defined on $\Omega$. For $z=(0,0,v,v)\in V_{\mathfrak p_1}\cap \overline{U_\Omega}$, one has $-\langle x,z\rangle = (-v)\,y$, so kernels obtained from the boundary slice can only be scalar multiples of $y^\lambda$. However, the space $S(\Omega)\cap H_\lambda(\Omega)$ is much larger. Indeed, for every $k\in\mathbb{N}$,
\begin{align*}
    u_k(x):=w^k y^{\lambda-k}\in S(\Omega)\cap H_\lambda(\Omega).
\end{align*}
These functions are linearly independent and are not scalar multiples of $y^\lambda$. Thus, even after allowing boundary parameters one detects only a proper subfamily of $\lambda$-homogeneous solutions. Note that the missing modes appear as tangential derivatives along $V_{\mathfrak p_1}$ at the slice $\{u=0\}$:
\begin{align*}
        \partial_u^k\bigl(-(u w+v y)\bigr)^\lambda\Big|_{u=0} = (-1)^k(\lambda)_k\, w^k(-v y)^{\lambda-k}.
\end{align*}
\end{example}

The last example shows that adding boundary parameters alone is not sufficient. Although the slice \(V_{\mathfrak p_1}\cap \overline{U_\Omega}\) is nonempty, the corresponding boundary kernels recover only part of the \(\lambda\)-homogeneous solution space. Since the missing modes arise from tangential derivatives along \(V_{\mathfrak p_1}\), a satisfactory enlargement of the parameter set would have to include tangential jet data of arbitrarily high order as well. After slightly shrinking \(\Omega\), however, \(U_\Omega\) enlarges, and the intersection \(V_{\mathfrak p_1}\cap U_\Omega\) is relatively open in \(V_{\mathfrak p_1}\). In this case, the required jet data is already encoded in the holomorphic dependence on the parameter, since tangential derivatives can be taken directly.

We therefore take the following approach. Instead of enlarging the parameter set by adding boundary points or jets, we impose a visibility condition ensuring analytic information on each characteristic component.

\begin{assumption}[Visibility Assumption on $P$ and $\Omega$]\label{ass}
Let $\Omega$ be OPC. For every \(1\le \sigma\le \mu\) there exists a compact convex set $K^*_{\sigma}\Subset \Omega$ such that for every compact convex $\tilde{K}$ with $K^*_{\sigma}\subset \tilde{K}\Subset \Omega$, every irreducible component of the analytic set $V_{\mathfrak{p}_{\sigma}}\cap U_{\tilde{K}}$ intersects $V_{\mathfrak{p}_{\sigma}}\cap U_{\Omega}$.
\end{assumption}

\begin{remark}
(1) Note that Assumption~\ref{ass} is true for any $r\neq 1$ in Example~\ref{ex:2}.\\
(2) Since the number of characteristic components is finite, we may replace the sets \(K^*_\sigma\) by a single compact convex set. Namely, set
\begin{align}\label{Kstar}
     K^* = \operatorname{conv}(K_{1}^*\cup \dots \cup K_{\mu}^*)\Subset \Omega.
\end{align}
Then the visibility assumption holds with \(K^*\) in place of each \(K^*_\sigma\).
\end{remark}

Assumption~\ref{ass} ensures that analytic information obtained on \(V_{\mathfrak p_\sigma}\cap U_{K'}\), for \(K'\) sufficiently large, is connected to the visible part \(V_{\mathfrak p_\sigma}\cap U_\Omega\). This is precisely the property needed in the duality argument below. Further discussion of Assumption~\ref{ass} is deferred to Subsection~\ref{dis:ass}.

\subsection{Density of the homogeneous solution family}
We now turn to our main result.

\begin{theorem}\label{main}
    Let $\lambda \in \mathbb{C}\setminus \mathbb{N}_0$, let $\Omega\subset \mathbb{R}^n$ be OPC, and assume Assumption \ref{ass}. Then $\operatorname{span}( S_{\mathcal{B},\lambda}(\Omega))$ is dense in $S(\Omega)\cap H_\lambda(\Omega)^L$.
\end{theorem}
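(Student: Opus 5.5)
The plan is to run the duality argument of Theorem~\ref{wotuw}, with the Mellin-type operator $M_\lambda^z$ inserted between the annihilation condition and the factorization. By Hahn--Banach it suffices to show the following: every $\nu\in\mathcal E'(\Omega)^L$ that vanishes on $S_{\mathcal B,\lambda}(\Omega)$ also vanishes on $S(\Omega)\cap H_\lambda(\Omega)^L$. Fix such a $\nu$. Since $\Omega$ is an open cone not containing $0$, we may choose a compact convex $K\Subset\Omega$ with $0\notin K$, $\operatorname{supp}\nu\subset K$ and $K^*\subset K$, with $K^*$ as in \eqref{Kstar}. Set $f:=\hat\nu\in\operatorname{PW}(K)^L$. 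By \eqref{mellin-distribution-identity}, and since $\Gamma(-\lambda)\neq0$ for $\lambda\notin\mathbb N_0$, the annihilation hypothesis says exactly that
\[
\tilde{\mathcal B}_{\sigma i}(z,\partial_z)M^z_\lambda f(z)=0\quad\text{for all } z\in V_{\mathfrak p_\sigma}\cap U_\Omega,
\]
for every $\sigma$ and $i$. Note that $U_\Omega\subset U_K$, because $K\subset\Omega$.

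The first step, which I expect to be the main obstacle, is to propagate this vanishing from $V_{\mathfrak p_\sigma}\cap U_\Omega$ to all of $V_{\mathfrak p_\sigma}\cap U_K$. The function $\tilde{\mathcal B}_{\sigma i}M^z_\lambda f$ is holomorphic on $U_K$, and $V_{\mathfrak p_\sigma}\cap U_\Omega$ is relatively open in $V_{\mathfrak p_\sigma}$ because $U_\Omega$ is open. By Assumption~\ref{ass}, applied with $\tilde K=K$, every irreducible component $Z$ of the analytic set $V_{\mathfrak p_\sigma}\cap U_K$ meets $V_{\mathfrak p_\sigma}\cap U_\Omega$. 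The intersection is then a nonempty relatively open subset of $Z$. The identity theorem on the irreducible analytic set $Z$ works on its connected regular locus, which is dense in $Z$, so vanishing on a nonempty open piece implies vanishing on all of $Z$. We conclude that $\tilde{\mathcal B}_{\sigma i}M^z_\lambda f=0$ on $V_{\mathfrak p_\sigma}\cap U_K$. The care needed here is to make sure that the open set $Z\cap U_\Omega$ actually contains regular points of $Z$, or at least an open piece of $Z$ that is not contained in another component. I would argue this using the fact that the singular locus and the pairwise intersections of components are nowhere dense in $Z$.

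Next we apply Lemma~\ref{chito2} with $V=V_{\mathfrak p_\sigma}$ and $\tilde{\mathcal B}=\tilde{\mathcal B}_{\sigma i}$. This gives weighted bounds $\sup_{z\in V_{\mathfrak p_\sigma}}|\tilde{\mathcal B}_{\sigma i}(E_\lambda^z)^{-1}f|\,w_{K,N}<\infty$, and taking the maximal $N$ makes them uniform over all $\sigma$ and $i$. Theorem~\ref{useituseit}, applied to $g:=(E^z_\lambda)^{-1}f\in\mathcal O(\mathbb C^n)^L$, then yields a decomposition $g=h+P^tk$ with $h\in\operatorname{PW}(K)^L$ and $k\in\mathcal O(\mathbb C^n)^J$. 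Applying $E^z_\lambda$ and using \eqref{zweo} gives
\[
f=E^z_\lambda h+P^tE^z_{\lambda-m}k .
\]
Here $E^z_\lambda h\in\operatorname{PW}(K)^L$ by \eqref{laberer}. Hence $P^tE^z_{\lambda-m}k=f-E^z_\lambda h$ lies in $\operatorname{PW}(\Omega)^L\cap P^t\mathcal O(\mathbb C^n)^J$, and Lemma~\ref{natos} gives $P^tE^z_{\lambda-m}k=P^t\hat\xi$ for some $\xi\in\mathcal E'(\Omega)^J$.

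Finally we transfer back to the $x$-side. By Paley--Wiener--Schwartz, $h=\hat\eta$ for some $\eta\in\mathcal E'(K)^L$. Since $\widehat{(E^x_\lambda)^t\eta}=E^z_\lambda\hat\eta$ and the Fourier--Laplace transform is injective, we obtain
\[
\nu=(E^x_\lambda)^t\eta+P(\partial_x)^t\xi .
\]
For $u\in S(\Omega)\cap H_\lambda(\Omega)^L$ this gives $\langle\nu,u\rangle=\langle\eta,E^x_\lambda u\rangle+\langle\xi,P(\partial_x)u\rangle=0$. This is the required annihilation, and density follows by Hahn--Banach.
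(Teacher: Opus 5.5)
Your proposal is correct and follows the paper's proof essentially step by step. You translate annihilation into vanishing of $\tilde{\mathcal B}_{\sigma i}M_\lambda^z\hat\nu$ on $V_{\mathfrak p_\sigma}\cap U_\Omega$, propagate it via Assumption~\ref{ass} and the identity theorem on irreducible analytic sets, and then use Lemma~\ref{chito2}, Theorem~\ref{useituseit}, \eqref{zweo} and Lemma~\ref{natos} before transforming back. The only cosmetic difference is that you build $K^*$ into $K$ from the start, whereas the paper enlarges afterwards to $\tilde K=\operatorname{conv}(K^*\cup K)$ and uses $\operatorname{PW}(K)\subset\operatorname{PW}(\tilde K)$.
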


The proof proceeds by duality and is done in four steps. First, annihilation of \(S_{\mathcal B,\lambda}(\Omega)\) for $\nu\in \mathcal{E}'(\Omega)^L$ is translated, via the Mellin identity, into vanishing of $\tilde{\mathcal{B}}_{\sigma i}(z,\partial_z)M_\lambda^z\hat\nu(z)$ on \(V_{\mathfrak{p}_\sigma}\cap U_\Omega\). Second, Assumption~\ref{ass} implies that this vanishing extends to \(V_{\mathfrak{p}_\sigma}\cap U_{\tilde K}\) for some compact convex set \(\tilde K\Subset\Omega\). Third, Lemma~\ref{chito2} and Theorem~\ref{useituseit} yield a decomposition of \((E_\lambda^z)^{-1}\hat\nu\) modulo \(P^t\mathcal O(\mathbb C^n)^J\) with Paley--Wiener control. Finally, we apply \(E_\lambda^z\) and transform back to the distribution side to show that \(\nu\) annihilates \(S(\Omega)\cap H_\lambda(\Omega)^L\).

\begin{proof}
By Hahn--Banach, it suffices to prove that every $\nu \in  \mathcal{E}'(\Omega)^L$ which annihilates $S_{\mathcal{B},\lambda}(\Omega)$ also annihilates $S(\Omega)\cap H_\lambda(\Omega)^L$. Let such a $\nu$ be given, and choose a compact convex set $K\Subset \Omega$ with $\operatorname{supp}(\nu)\subset K$. By the definition of $S_{\mathcal B,\lambda}(\Omega)$, for every $1\leq \sigma\leq \mu$ and $1\leq i\leq \rho_\sigma$, we have
\begin{align}\label{saxxo}
         \tilde{\mathcal{B}}_{\sigma i}(z,\partial_z) M_\lambda^z \hat{\nu}(z) = 0\qquad \forall z\in V_{\mathfrak{p}_\sigma}\cap U_\Omega,
\end{align}
where we used (\ref{mellin-distribution-identity}) and $\Gamma(-\lambda)\neq 0$. Let \(K^*\Subset\Omega\) be the compact convex set from \eqref{Kstar}, and set $\tilde K\coloneqq \operatorname{conv}(K^* \cup K)\Subset \Omega$. We claim that for every $1\leq \sigma\leq \mu$ and $1\leq i\leq \rho_\sigma$:
\begin{align}\label{saxxo2}
         \tilde{\mathcal{B}}_{\sigma i}(z,\partial_z) M_\lambda^z \hat{\nu}(z) = 0\qquad \forall z\in V_{\mathfrak{p}_\sigma}\cap U_{\tilde K}.
\end{align}
Indeed, since \(K\subset\tilde K\), we have \(U_{\tilde K}\subset U_K\), and hence \(M_\lambda^z\hat\nu\) is holomorphic on \(U_{\tilde K}\). Fix $(\sigma,i)$, and put
\begin{align*}
      F_{\sigma i}(z):=\tilde{\mathcal{B}}_{\sigma i}(z,\partial_z)M_\lambda^z\hat\nu(z).
\end{align*}
Let \(Y\) be an irreducible analytic component of \(V_{\mathfrak p_\sigma}\cap U_{\tilde K}\). Since $F_{\sigma i}\in \mathcal{O}(U_{\tilde K})$, its restriction to $Y$ is holomorphic. By Assumption~\ref{ass}, we have \(Y\cap U_\Omega\neq\emptyset\). Since \(U_\Omega\) is open and \(U_\Omega\subset U_{\tilde K}\), the set \(Y\cap U_\Omega\) is a nonempty relatively open subset of \(Y\). On this subset, \(F_{\sigma i}\) vanishes by \((\ref{saxxo})\). Hence, by the identity theorem on irreducible analytic sets, see e.g. \cite[Ch.~1, Sec.~5.3, Corollary~2]{chirkaComplexAnalyticSets1989}, $F_{\sigma i}$ vanishes identically on \(Y\). Since \(Y\) was arbitrary, (\ref{saxxo2}) follows.

By Lemma \ref{chito2} and (\ref{saxxo2}), and since there are only finitely many pairs $(\sigma,i)$:
\begin{align*}
    \max_{1\leq \sigma \leq \mu}\max_{1\leq i\leq \rho_\sigma} \sup_{z\in V_{\mathfrak{p}_\sigma}} | \tilde{\mathcal{B}}_{\sigma i}(z,\partial_z) (E^z_\lambda)^{-1} \hat{\nu}(z)| w_{\tilde K,N}(z)<\infty\quad \text{for some } N\in \mathbb{N}_0. 
\end{align*}

By Theorem \ref{useituseit}, there exist $g\in \operatorname{PW}(\tilde K)^L \subset \operatorname{PW}(\Omega)^L$ and $\tilde{h}\in \mathcal{O}(\mathbb{C}^n)^J$ such that $(E^z_\lambda)^{-1} \hat{\nu} = g + P^t \tilde{h}$. Applying $E_\lambda^z$ gives $\hat{\nu} = E_\lambda^z g + P^t h $ for $h = E^z_{\lambda-m} \tilde{h}$ by (\ref{zweo}). Since $g, \hat{\nu}\in \operatorname{PW}(\Omega)^L$, it follows that
 \begin{align*}
       P^t h =  \hat{\nu} -  E_\lambda^z g  \in \operatorname{PW}(\Omega)^L \cap P^t \mathcal{O}(\mathbb{C}^n)^J.
 \end{align*}
Lemma~\ref{natos} therefore gives $P^t h\in P^t\operatorname{PW}(\Omega)^J$, and consequently,
\begin{align*}
     \hat{\nu} \in E_\lambda^z \operatorname{PW}(\Omega)^L + P^t \operatorname{PW}(\Omega)^J.
\end{align*}
Taking the inverse Fourier--Laplace transform, we obtain $\nu = (E_\lambda^x)^t\eta + P(\partial_x)^t \xi$ for some $\eta\in \mathcal{E}'(\Omega)^L$ and $\xi\in \mathcal{E}'(\Omega)^J$. Since $(E^x_\lambda)^t \mathcal{E}'(\Omega)^L $ annihilates $H_\lambda(\Omega)^L$ and $P(\partial_x)^t\mathcal{E}'(\Omega)^J $ annihilates $S(\Omega)$, the distribution $\nu $ annihilates $S(\Omega)\cap H_\lambda(\Omega)^L$.
\end{proof}

\section{Discussion}

\subsection{Comments on Assumption~\ref{ass}}\label{dis:ass}
Examples~\ref{ex:1}, \ref{ex:2}, and \ref{ex:3} show that
Assumption~\ref{ass} is not automatic. In each of these examples, however, the
obstruction disappears after a small perturbation of the cone. This suggests
that the visibility assumption may be mild, at least up to perturbing
\(\Omega\). It remains open whether such a perturbation argument is always possible, that is, whether there exists a sufficiently small perturbation \(\tilde \Omega\) for $\Omega$ for which Assumption~\ref{ass} holds. The author is not aware of a counterexample. It would therefore be useful either to construct one, prove that this is always true, or to find sufficient conditions for Assumption~\ref{ass}.

\subsection{Non-pointed cones \(\Omega\subset \mathbb{R}^n\)}
Recall that \(z\in U_\Omega\) guarantees \(\Re(-\langle x,z\rangle)>0\) for all \(x\in\Omega\), and hence absolute convergence of the Mellin-type integrals defining \(M_\lambda^z\). However, for \((-\langle x,z\rangle)^\lambda\) to be single-valued on \(\Omega\), one only needs \(-\langle x,z\rangle\) to avoid the branch cut \((-\infty,0]\). Thus, single-valuedness may hold even when \(z\notin U_\Omega\), as the following example shows.

\begin{example}\label{simple}
Let $0<\alpha<\pi$, and consider the cone
\begin{align*}
    \Omega_\alpha \coloneqq \{(r,\varphi): r>0,\ \varphi\in(-\alpha,\alpha)\}\subset \mathbb{R}^2\quad \text{for }(r,\varphi) \text{ polar coordinates}.
\end{align*}
For $P(\partial_x) = \Delta$, note that 
\begin{align*}
    \Omega \ni x\mapsto u(x) = (-\langle x,z\rangle)^\lambda = (x_1+ix_2)^\lambda  \quad \text{for}\quad z=-(1,i)
\end{align*}
is smooth, harmonic, and $\lambda$-homogeneous on $\Omega_\alpha$, for all $\alpha<\pi$, including the non-pointed regime $\alpha \geq \pi/2$. On the other hand, $U_{\Omega_\alpha}=\emptyset$ for $\alpha\geq \pi/2$, so in particular $z\notin U_{\Omega_\alpha}$. Compare this to the solution family for elliptic systems in \cite[Sec.~4.2]{Tsopanopoulos2025}.
\end{example}

Example \ref{simple} suggests introducing the larger parameter set
\[
\mathcal{D}_\Omega
\coloneqq
\{z\in \mathbb{C}^n : -\langle x,z\rangle \notin \mathbb{R}_{\leq 0}\ \text{for all }x\in \Omega\}.
\]
By definition, $U_\Omega \subset\mathcal{D}_\Omega$. Moreover, $-(1,i)\in \mathcal{D}_{\Omega_\alpha}$ for $\alpha<\pi$ in Example \ref{simple} since $-(1,0)\notin \Omega_\alpha$. For pointed cones, the set \(U_\Omega\) is sufficient for Theorem \ref{main}. For non-pointed cones, however, Example \ref{simple} indicates that \(\mathcal{D}_\Omega\) is a more natural parameter domain for describing solutions, leading to the enlarged family
\begin{align*}
    S^{\mathcal D}_{\mathcal{B},\lambda}(\Omega)
    \coloneqq
    \left\{
    \mathcal{B}_{\sigma i}(z,\partial_z)\bigl(-\langle x,z\rangle\bigr)^\lambda :
    \sigma=1,\dots,\mu,\;
    i=1,\dots,\rho_\sigma,\;
    z\in V_{\mathfrak{p}_\sigma}\cap \mathcal D_\Omega
    \right\}.
\end{align*}

\subsection{Graded structure and graded choices of $\mathcal{B}_{\sigma i}$}\label{gradeyou}
The ring of polynomials has a natural graded structure $\mathbb{C}[z]=\bigoplus_{d\ge 0} \mathcal{P}_d[z]$ where $\mathcal{P}_d[z]$ denotes homogeneous polynomials of degree $d$. Since $P(z)$ is homogeneous of degree $m$, the submodule $M \coloneqq P^t \mathbb C[z]^J \subset \mathbb C[z]^L$ can be decomposed into
\[
M=\bigoplus_{r\ge m} M_r,
\qquad
M_r \coloneqq M\cap \mathcal{P}_r[z]^L
     = P^t\bigl(\mathcal{P}_{r-m}[z]^J\bigr).
\]
In particular, $\mathcal{P}_d[z]\cdot M_r \subset M_{d+r}$ for all $d,r\ge 0$. Thus the module \(M\) is not merely a \(\mathbb C[z]\)-submodule, but a graded \(\mathbb C[z]\)-submodule. Since $M$ is graded, one may choose the primary decomposition $M = \bigcap_{\sigma=1}^\mu M_\sigma$ to be homogeneous; see \cite[Sec.~3.5]{E1995}. It is natural to ask whether the Noetherian operators \(\mathcal{B}_{\sigma i}(z, \partial_z) \) can be chosen compatibly with this grading. Note that 
\begin{align*}
    [E_0^z, a_\alpha \,\partial^\alpha_z] = (d-|\alpha|)a_\alpha\,\partial^\alpha_z\quad \text{for }a_\alpha\in \mathcal{P}_d[z]^L.
\end{align*}
This suggests that one should try to choose the operators \(\mathcal B_{\sigma i}\) so that all terms occurring in them have the same Euler weight. Equivalently, one would ask for
\begin{align}\label{suchachoice}
[E^z_0,\mathcal B_{\sigma i}]=\kappa_{\sigma i}\,\mathcal B_{\sigma i}\quad \text{for some }\kappa_{\sigma i}\in \mathbb{Z}.
\end{align}

The advantage of choosing the Noetherian operators in this graded form is that if $f(z)$ is homogeneous of degree $\lambda$ in $z\in \mathbb{C}^n$, then $\mathcal{B}_{\sigma i}f(z)$ is homogeneous of degree $\lambda+\kappa_{\sigma i}$. In particular, for fixed $x\in \Omega$, the function
\[
z\mapsto \mathcal B_{\sigma i}(z,\partial_z)(-\langle x,z\rangle)^\lambda
\]
is homogeneous of degree $\lambda+\kappa_{\sigma i}$ in \(z\in U_\Omega\). Hence, the existence of Noetherian operators satisfying \eqref{suchachoice} would allow one to work with equivalence classes of points in \(V_{\mathfrak p_\sigma}\cap U_\Omega\), where two points are identified when they differ by a complex scaling. Here, recall that $U_\Omega$ is invariant under real scaling, but is not a complex cone.

\subsection{Extension to $\lambda \in \mathbb{N}_0$}\label{choicelambda}
In this work, we have restricted ourselves to $\lambda \notin \mathbb{N}_0$. For $\lambda\in \mathbb{N}_0$, the kernels $\mathcal{B}_{\sigma i}(z,\partial_z) \left(-\langle x,z\rangle\right)^\lambda$ still make sense, but for fixed $z$ they produce only polynomial functions of $x$. In general, these do not suffice to describe the space $S(\Omega) \cap H_\lambda(\Omega)^L$. This is already visible in the case $\lambda=0$. Indeed, then $ \left(-\langle x,z\rangle\right)^\lambda=1$, so the resulting kernels are constant in $x$. On the other hand, for $\Omega=\{x_1>0\}\subset \mathbb{R}^2$ and $P(\partial_x) = \Delta$, the function $u(x) = \operatorname{arg}(x_1+ix_2)$ is smooth, harmonic, and $0$-homogeneous on $\Omega$, but not constant.

At present, it is not clear to the author what the most fruitful extension of the theory to $\lambda\in \mathbb{N}_0$ should be. As a starting point, one could allow additional building blocks of the form 
\begin{align*}
    (-\langle x,z\rangle)^\lambda \operatorname{arg}(-\langle x,z\rangle)\quad \text{or}\quad (-\langle x,z\rangle)^\lambda \log(-\langle x,z\rangle).
\end{align*}
The latter would lead to generalized $\lambda$-homogeneous elements, i.e. solutions lying in $\operatorname{ker}\left((E_\lambda^x)^k\right)$ for some power $k\in \mathbb{N}$, and thus to a Jordan structure for the Euler operator; see also \cite[Sec.~1.1.3]{VMR2001SPCS}.

\subsection{$\lambda$-homogeneous strong fundamental principle}
Theorem \ref{main} should be viewed as a weak $\lambda$-homogeneous fundamental principle, since it establishes density of $S_{\mathcal{B},\lambda}(\Omega)$ in \(S(\Omega) \cap H_\lambda(\Omega)^L \), but no representation formula. It is natural to conjecture that a corresponding strong \(\lambda\)-homogeneous fundamental principle also holds, namely an adaptation of the Ehrenpreis--Palamodov representation theorem of the following type: every solution \(u\in S(\Omega) \cap H_\lambda(\Omega)^L\) admits a representation
\begin{align*}
    u(x) = \sum^\mu_{\sigma=1} \sum^{\rho_\sigma}_{i=1} \int_{Z_\sigma}  \mathcal{B}_{\sigma i}(z,\partial_z) \left(-\langle x,z\rangle\right)^\lambda d\omega^{(\lambda)}_{\sigma i}(z) ,
\end{align*}
where each $\omega^{(\lambda)}_{\sigma i}$ is a suitable weighted complex Radon measure on a parameter space $Z_\sigma \subset \mathbb{C}^n$ such that the integrals converge absolutely. At present, this is as a template for future work. Several points need to be clarified, including the appropriate class of cones \(\Omega\), the correct choice of the parameter spaces \(Z_\sigma\), the measure and growth conditions on \(\omega_{\sigma i}^{(\lambda)}\), and the modification of the kernels in the exceptional case \(\lambda\in\mathbb N_0\). The present work should be regarded as a first step.

\subsection*{Contributions and Funding}
The author acknowledges Matthias Liero for helpful comments and suggestions on an earlier version of this manuscript. The author is funded by the Deutsche Forschungsgemeinschaft (DFG, German Research Foundation) under Germany's Excellence Strategy – The Berlin Mathematics Research Center MATH+ (EXC-2046/1, project ID: 390685689).

\subsection*{Disclosure of AI Use}
During the preparation and writing of this paper, the author used OpenAI's ChatGPT 4.4 and 4.5 as an AI tool. It was used to support literature search, improve language and exposition, and help identify possible errors in mathematical arguments. AI-generated output was critically reviewed and verified by the author. The author takes full responsibility for the final content of this work.


\end{document}